\newtheorem{theorem}{Theorem}
\newtheorem{proposition}[theorem]{Proposition}
\newtheorem{definition}{Definition}
\newtheorem{example}{Example}
\newtheorem{remark}{Remark}[section]
\title{Fixed points theorems for unsaturated and saturated classes of contractive mappings in Banach spaces}
\author{Vasile BERINDE}
\address{Department of Mathematics and Computer Science, Technical University of Cluj-Napoca, North University Center at Baia Mare, 
Victoriei 76, 430122 Baia Mare, ROMANIA; 
E-mail: vasile.berinde@mi.utcluj.ro}
\author{M\u ad\u alina P\u acurar}
\address{Department of Economics and Bussiness Administration
	 in German Language 
	Faculty of Economics  and Bussiness Administration 
	Babe\c s-Bolyai University of Cluj-Napoca 
	T. Mihali 58-60, 400591 Cluj-Napoca, ROMANIA; E-mail: madalina.pacurar@econ.ubbcluj.ro}
\begin{document}
\maketitle \pagestyle{myheadings} \markboth{V. Berinde, M. P\u acurar} {Fixed points theorems for unsaturated and saturated classes}
\begin{abstract}
Based on the technique of enriching contractive type mappings, a technique that has been used successfully in some recent papers, we introduce the concept of {\it saturated} class of contractive mappings. We show that, from this  perspective, the contractive type mappings in the metric fixed point theory can be separated into two distinct classes, unsaturated and saturated, and that, for any unsaturated class of mappings, the technique of enriching contractive type mappings provides genuine new fixed point results. We illustrate the concept by surveying some significant fixed point results obtained recently for five remarkable unsaturated classes of contractive mappings. In the second part of the paper, we also identify two important classes of saturated contractive mappings, whose main feature is that they cannot be enlarged by means of the technique of enriching the contractive mappings.
\end{abstract}

\section{Introduction}

Let $(X,d)$ be a metric space and let $T:X\rightarrow X$ be a self mapping. We denote the set of fixed points of $T$ by $Fix\,(T)$, i.e., $Fix\,(T)=\{a\in X: Ta=a\}$ and define the $n^{th}$ iterate of $T$ as usually, that is, $T^0=I$ (identity map) and $T^n=T^{n-1}\circ T$, for $n\geq 1$.

The mapping $T:X\rightarrow X$ is called a {\it Banach contraction} if there exists a constant $c\in[0,1)$ such that
\begin{equation} \label{cond_Banach}
d(Tx,Ty)\leq c\cdot d(x,y),\textnormal{ for all } x,y \in X.
\end{equation}

Recall that the mapping $T$ is said to be a {\it Picard operator}, see for example Rus \cite{Rus01}, if $ (i) $ $Fix\,(T)=\{p\}$  and $ (ii) $  $T^nx_0 \rightarrow p$ as $n\rightarrow \infty$, for any $x_0$ in $X$.

The Banach contraction mapping principle,  first introduced by Banach \cite{Ban22} in the setting of a Banach space and then extended by Caccioppoli  \cite{Cac} to the setting of a complete metric space,  essentially states that, in a complete metric space $(X,d)$, any Banach contraction $T:X\rightarrow X$
is a Picard operator.

It is easy to see by \eqref{cond_Banach} that any Banach contraction is continuous but, as shown by Kannan in 1968, see also \cite{Kan68}, \cite{Kan69}, in general, a Picard operator is not necessarily continuous. As, for two distinct points $x,y\in X$, there exist six possible displacements
\begin{equation}\label{eq1}
d(Tx, Ty), d(x,y), d(x,Tx), d(y,Ty), d(x,Ty) \textnormal{ and } d(y,Tx),
\end{equation}
in order to obtain a metric fixed point result for  a mapping $T$, there were used two, like in the case of the Banach contraction condition \eqref{cond_Banach}, three or more displacements from the list \eqref{eq1}. 

In the development of metric fixed point theory, the following contractive condition, introduced in 1969 by Kannan \cite{Kan68}, \cite{Kan69}, and which uses three displacements from the list \eqref{eq1}, is extremely important for many reasons.

A mapping $T:X\rightarrow X$ is called a {\it Kannan mapping} if there exists a constant $a\in[0,1/2)$ such that
\begin{equation} \label{cond_Kannan}
d(Tx,Ty)\leq a \left(d(x,Tx)+d(y,Ty)\right),\textnormal{ for all } x,y \in X.
\end{equation}

\begin{theorem} (\cite{Kan68})\label{th-K}
Let $(X,d)$ be  a complete metric space and let $T:X\rightarrow X$ be a Kannan mapping. Then $T$ is a Picard operator.
\end{theorem}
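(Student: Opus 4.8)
The plan is to mimic the classical proof of the Banach contraction principle via the Picard iteration, with one extra adjustment to cope with the fact that a Kannan mapping need not be continuous. Fix an arbitrary $x_0\in X$ and set $x_{n+1}=Tx_n$ for $n\geq 0$. First I would substitute $x=x_{n-1}$, $y=x_n$ into \eqref{cond_Kannan} to obtain
$d(x_n,x_{n+1})\leq a\bigl(d(x_{n-1},x_n)+d(x_n,x_{n+1})\bigr)$, and then solve for $d(x_n,x_{n+1})$ to get $d(x_n,x_{n+1})\leq k\,d(x_{n-1},x_n)$ with $k=a/(1-a)$. The crucial numerical point is that the hypothesis $a\in[0,1/2)$ forces $k\in[0,1)$, which is exactly what drives the rest of the argument.

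With $d(x_n,x_{n+1})\leq k^n d(x_0,x_1)$ established by induction, the usual telescoping estimate $d(x_n,x_{n+p})\leq (k^n+\dots+k^{n+p-1})\,d(x_0,x_1)\leq \frac{k^n}{1-k}\,d(x_0,x_1)$ shows that $(x_n)$ is a Cauchy sequence; by completeness of $(X,d)$ it converges to some $p\in X$. To prove $Tp=p$ without invoking continuity, I would apply \eqref{cond_Kannan} to $x=x_n$, $y=p$ and combine it with the triangle inequality:
$d(p,Tp)\leq d(p,x_{n+1})+d(x_{n+1},Tp)\leq d(p,x_{n+1})+a\,d(x_n,x_{n+1})+a\,d(p,Tp)$, whence $(1-a)\,d(p,Tp)\leq d(p,x_{n+1})+a\,d(x_n,x_{n+1})$. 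Letting $n\to\infty$, the right-hand side tends to $0$, so $d(p,Tp)=0$, i.e. $p\in Fix\,(T)$. Uniqueness then follows by plugging two fixed points $p,q$ into \eqref{cond_Kannan}: $d(p,q)=d(Tp,Tq)\leq a\bigl(d(p,Tp)+d(q,Tq)\bigr)=0$. Hence $Fix\,(T)=\{p\}$ and $T^nx_0\to p$ for every choice of $x_0$, so $T$ is a Picard operator.

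The only genuine subtlety — the "main obstacle", if one can call it that — is precisely the absence of continuity: one cannot simply pass to the limit in $d(Tx_n,Tp)$, so the triangle-inequality detour used to show $Tp=p$ is essential rather than cosmetic. Everything else is a routine adaptation of the bookkeeping behind the contraction mapping principle, the substitution of a single pair of consecutive iterates into \eqref{cond_Kannan} being the step that replaces the direct contractivity estimate.
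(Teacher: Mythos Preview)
Your proof is correct and is exactly the classical argument for Kannan's theorem: show that consecutive Picard iterates contract by the factor $k=a/(1-a)<1$, deduce that the sequence is Cauchy, identify the limit as a fixed point via the triangle inequality combined with one more application of \eqref{cond_Kannan}, and finish with the trivial uniqueness step. The paper itself does not supply a proof of Theorem~\ref{th-K}; it merely quotes the result from \cite{Kan68}, so there is nothing to compare against beyond noting that your argument is the standard one found in Kannan's original paper and in the monographs cited (e.g.\ \cite{Ber07}, \cite{Rus01}).
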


In 1972,   Chatterjea \cite{Chat}  introduced  the following dual of the Kannan contraction condition:
\begin{equation} \label{Def_Chatterjea}
d(Tx,Ty)\leq b \left[d(x,Ty)+d(y,Tx)\right],\forall x,y \in X,
\end{equation}
where $b$ is a constant,  $b\in[0,1/2)$, for which a similar fixed point theorem to Theorem \ref{th-K} holds.

It is known, see  \cite{Rho}, \cite{Mes}  and \cite{Ber04}, that conditions \eqref{cond_Banach}, \eqref{cond_Kannan} and \eqref{Def_Chatterjea} are independent, that is, there exist Banach contractions which are not Kannan mappings, and Kannan mappings, which are not Banach contractions and so on. For more details and discussions on this topic see \cite{Mes}, \cite{Rho},  \cite{Ber07} and \cite{Pac09}.

Based on the previous observation, in 1972 Zamfirescu \cite{Zam} formulated a very interesting fixed point theorem that involves all three conditions \eqref{cond_Banach}, \eqref{cond_Kannan}, and \eqref{Def_Chatterjea}.   

\begin{theorem}[\cite{Zam}]\label{thZ}
Let $(X,d)$ be a complete metric space and $T:X\longrightarrow X$ a map for which there exist the real numbers $a,b$ and
$c$ satisfying $0\leq a<1$, $0<b, c<1/2$, such that for each pair $x,y$ in $X$, at least one of the following is true:
\newline
\indent $(z_1)$ $d(Tx, Ty)\leq a\,d(x,y)$;\\[4pt]
\indent $(z_2)$ $d(Tx, Ty)\leq b\big[d(x, Tx)+d(y, Ty)\big]$;\\[4pt]
\indent $(z_3)$ $d(Tx, Ty)\leq c\big[d(x,Ty)+d(y,Tx)\big]$.\newline
Then $T$ is a Picard operator.
\end{theorem}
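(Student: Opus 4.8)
The plan is to show that, even though which of the alternatives $(z_1)$, $(z_2)$, $(z_3)$ is in force may vary with the pair $(x,y)$, all three produce the \emph{same} contractive estimate on consecutive terms of a Picard iteration. Set
\[
\delta=\max\left\{a,\ \frac{b}{1-b},\ \frac{c}{1-c}\right\},
\]
and note that $a<1$, $0<b<1/2$ and $0<c<1/2$ force $\delta\in[0,1)$. Fix $x_0\in X$ and put $x_{n+1}=Tx_n$ for $n\geq 0$.

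First I would analyse the pair $(x_{n-1},x_n)$. If $(z_1)$ holds, then $d(x_n,x_{n+1})\leq a\,d(x_{n-1},x_n)$. If $(z_2)$ holds, then substituting directly gives $d(x_n,x_{n+1})\leq b\big[d(x_{n-1},x_n)+d(x_n,x_{n+1})\big]$, whence $d(x_n,x_{n+1})\leq\tfrac{b}{1-b}\,d(x_{n-1},x_n)$. If $(z_3)$ holds, then, using $d(x_n,x_n)=0$ and the triangle inequality,
\[
d(x_n,x_{n+1})\leq c\,d(x_{n-1},x_{n+1})\leq c\big[d(x_{n-1},x_n)+d(x_n,x_{n+1})\big],
\]
whence $d(x_n,x_{n+1})\leq\tfrac{c}{1-c}\,d(x_{n-1},x_n)$. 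In all three cases $d(x_n,x_{n+1})\leq\delta\,d(x_{n-1},x_n)$, so $d(x_n,x_{n+1})\leq\delta^n d(x_0,x_1)$, the series $\sum_{n\geq 0}d(x_n,x_{n+1})$ converges, $(x_n)$ is Cauchy, and by completeness $x_n\to p$ for some $p\in X$.

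Next I would verify $Tp=p$. Applying the trichotomy to the pairs $(x_n,p)$ and invoking the pigeonhole principle, at least one of $(z_1)$, $(z_2)$, $(z_3)$ holds for infinitely many $n$; along such a subsequence a short triangle-inequality estimate forces $d(p,Tp)=0$ (for instance, in case $(z_2)$ one gets $(1-b)\,d(p,Tp)\leq d(p,x_{n+1})+b\,d(x_n,x_{n+1})\to 0$, and cases $(z_1)$, $(z_3)$ are handled the same way). For uniqueness, if $p\neq q$ were both fixed, the trichotomy applied to $(p,q)$ gives $d(p,q)\leq a\,d(p,q)$ in case $(z_1)$, $d(p,q)\leq 0$ in case $(z_2)$, and $d(p,q)\leq 2c\,d(p,q)$ in case $(z_3)$ — each impossible since $a<1$ and $b,c<1/2$. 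Hence $Fix\,(T)=\{p\}$, and since $x_0$ was arbitrary, $T$ is a Picard operator.

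The one genuine subtlety, and the point I would treat most carefully, is that the active alternative is not fixed but changes with the pair under consideration, so one cannot simply quote a single known contraction theorem; the remedy is the uniform bound $\delta<1$ valid in every case for consecutive iterates, together with the pigeonhole argument in the limit step. The small algebraic coincidences that make case $(z_3)$ collapse — namely $d(x_n,x_n)=0$ in the iteration and $d(p,Tq)+d(q,Tp)=2\,d(p,q)$ for fixed points — are exactly what keep the resulting constant strictly below $1$.
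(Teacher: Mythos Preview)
The paper does not supply its own proof of this theorem: it is stated in the introduction with a citation to Zamfirescu \cite{Zam} and used only as background for the later discussion of enriched contractions, so there is nothing in the paper to compare your argument against line by line.

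That said, your proof is correct and is essentially the classical one. The reduction to a single contraction rate $\delta=\max\{a,\,b/(1-b),\,c/(1-c)\}<1$ on consecutive Picard iterates, the Cauchy argument, the limit step via a case analysis on the pair $(x_n,p)$, and the uniqueness check are all handled properly. One cosmetic remark: the pigeonhole argument in the limit step is unnecessary. For each fixed $n$ you can simply run whichever case applies to $(x_n,p)$ and obtain an inequality of the form $(1-\delta)\,d(p,Tp)\leq C\big(d(p,x_{n+1})+d(x_n,x_{n+1})+d(x_n,p)\big)$ valid for \emph{every} $n$ (with the constant $C$ depending only on $a,b,c$), and then let $n\to\infty$ directly; no subsequence is needed. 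This is how the argument is usually presented in the literature (see e.g.\ \cite{Ber04}, \cite{Ber07}), where one shows that a Zamfirescu map is in fact an almost contraction in the sense of Definition~\ref{def-1}, from which the Picard-operator property follows immediately.
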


In 2004, Berinde \cite{Ber04} unified all previous classes of contractive mappings by means of the {\it non symmetric} contraction condition \eqref{2.1} and established an essentially different kind of fixed point theorems, see \cite{BerP16} for a recent survey on the fixed point theory of single-valued almost contractions.

\begin{definition} (\cite{Ber04}) \label{def-1}
Let $(X,d)$ be a metric space. A mapping $T:X\rightarrow X$ is called a $(\delta,L)$-
\textit{almost contraction} if there exist the constants $\delta\in (0,1)$ and $L\geq 0$ such that
\begin{equation}  \label{2.1}
d(Tx, Ty)\leq\delta\cdot d(x,y)+L d(y,Tx)\,, \quad\textnormal{for all}\;\;x,y\in
X.
\end{equation}
%In order to be more precise, we shall also call $T$ as \emph{a $(\delta,L)$-almost contraction}.
\end{definition}

\noindent Another general contraction conditions in this family, which involves all six displacements in the list \eqref{eq1} and due to L. B. \' Ciri\' c, see \cite{Cir74}, also unifies all previous contractive mappings, except for the class of almost contractions \eqref{2.1}. 

A map $T:X\rightarrow X$ is called a \textit{\' Ciri\' c quasi contraction} if there exists $h\in (0,1)$  such that for all $x,y\in X$,
\begin{equation}  \label{eq-Ciric}
d(Tx, Ty)\leq h\max\big\{d(x,y), d(x, Tx), d(y, Ty), d(x, Ty), d(y,Tx)
\big\}. 
\end{equation}
It is important to note that \eqref{2.1} and \eqref{eq-Ciric} are independent to each other, see \cite{Ber04} and \cite{BerP16}. For the impressive rich literature on this area, we refer to the monographs \cite{Ber07}, \cite{Pac09}, \cite{Rus79}-\cite{Rus01}, \cite{Rus08}, and the references therein.

    On the other hand, in some recent papers \cite{Ber19b},  \cite{Ber19a},  \cite{Ber20},  \cite{BerP20},  \cite{BerP21},  \cite{BerP19c} and \cite{arhiv}, the authors used the technique of enrichment of contractive type mappings to generalize, in the setting of a Hilbert or a Banach space, well known and important classes of contractive type mappings  from the metric fixed point theory satisfying {\it symmetric} and {\it non symmetric} metric conditions. 

The technique of enriching the contractive mappings has been suggested by the following development in fixed point theory related to the concept of asymptotic regularity. Formally, this  notion was introduced in 1966 by Browder and Petryshyn (\cite{BroP66}, Definition 1, page 572) in connection with the study of fixed points of nonexpansive mappings but the same property was used in 1955 by Krasnosel'ski\v \i  \, \cite{Kra55},  to prove that if $K$ is a compact convex subset of a uniformly convex Banach space and if $T: K\rightarrow K$ is nonexpansive, then, for any $x_0\in K$, the sequence
\begin{equation} \label{eq1a}
x_{n+1}=\frac{1}{2}\left(x_n+ T x_n\right),\,n\geq 0,
\end{equation}
converges to to a fixed point of $T$. 

In proving his result, Krasnosel'ski\v\i  \, used the important fact that, if $T$ is a nonexpansive mapping, which in general is not asymptotically regular, then the averaged mapping involved in \eqref{eq1a}, that is, $\frac{1}{2}I+\frac{1}{2} T$, is asymptotically regular. For the general averaged mapping 
$$
T_\lambda:=(1-\lambda) I+\lambda T, \, \lambda\in (0,1),
$$
and in the setting of a Hilbert space, the corresponding result has been stated by Browder and Petryshyn  (\cite{BroP67}, Corollary to Theorem 5), while Ishikawa \cite{Ishi} proved in 1976  the same result with no restriction on the geometry of the Banach space involved. 

Therefore, the averaged operator $T_\lambda$ is {\em enriching} the class of nonexpansive mappings with respect to the asymptotic regularity.
This fact suggested us that one could similarly enrich the classes of contractive mappings in metrical fixed point theory by imposing that $T_\lambda$ rather than $T$ satisfies a certain contractive condition.

In this way, the following classes of mappings were introduced and studied: enriched contractions and enriched $\varphi$-contractions \cite{BerP20}, enriched Kannan contractions \cite{BerP21}, enriched Chatterjea mappings \cite{BerP19c}, enriched almost contractions \cite{arhiv}, enriched nonexpansive mappings in Hilbert spaces \cite{Ber19a}, enriched nonexpansive mappings in Banach spaces \cite{Ber19a},  enriched strictly pseudocontractive operators \cite{Ber19b}, enriched nonexpansive semigroups \cite{Kes} etc. 

We note that all the above mentioned fixed point results for enriched mappings were established in Banach or Hilbert spaces, which are metric spaces with generous geometrical properties. In order to extend those fixed point results  to the setting of a metric space $(X,d)$, one needs some additional geometric properties of the space $X$, related to the convexity in the usual sense for subsets of Euclidian space, and expressed by the fact that, for any two distinct points $x$ and $y$ in $X$, there exists a third point $z$ in $X$ {\it lying between} $x$ and $y$.

Using an appropriate convexity structure in metric spaces, introduced by Takahashi \cite{Tak}, some of the above-mentioned results for enriched contractions, that is, those for enriched contractions and enriched $\varphi$-contractions \cite{BerP20}, have been subsequently extended \cite{BerP21} to the more general setting of convex metric spaces.

Moreover, after a close examination of the new fixed point results reported in \cite{Ber19b},  \cite{Ber19a},  \cite{Ber20},  \cite{BerP20},  \cite{BerP21},  \cite{BerP19c} and \cite{arhiv}, we noted that the technique of enriching contractive type mappings $T$ by means of the averaged operator $T_\lambda$ cannot effectively enlarge  all classes of contractive mappings, as initially expected.

This observation suggested us a new interesting concept, that is, that of {\it saturated class of contractive mappings} with respect to the averaged operator $T_\lambda$,  which will be introduced in the next section. 

We show that,  from this  perspective, many well-known contractive type mappings in metrical fixed point theory can be classified into these two distinct classes: unsaturated contractive mappings and saturated contractive mappings. 

For  some remarkable unsaturated classes of contractive mappings, we present various significant fixed point results with appropriate illustrative examples, while, in the second part of the paper, we also identify two important classes of saturated contractive mappings, whose main feature is that they cannot be enlarged by means of the technique of enriching contractive mappings.

\section{Unsaturated classes of contractive mappings}

In this section we present five classes of unsaturated contractive mappings: Banach contractions, Kannan contractions, Chatterjea contractions, almost contractions and nonexpansive mappings. For each class, after showing why it is unsaturated, we also indicate some relevant fixed point results obtained by the technique of enriching the respective class. 

First we need to recall an important property that is fundamental in obtaining all fixed point results for enriched contractive mappings reported in \cite{Ber19b},  \cite{Ber19a},  \cite{Ber20},  \cite{BerP20},   \cite{BerP19c} and \cite{arhiv}.

	Let $T:C\rightarrow C$ be a self mapping of a convex subset $C$ of a linear space $X$. Then, for any  $\lambda\in(0,1]$, the so-called averaged mapping (a term coined in \cite{Bai}) $T_{\lambda}$ given by
	\begin{equation} \label{def_T.lambda}
	T_\lambda x=(1-\lambda)x+\lambda Tx, \forall  x \in C,
	\end{equation}
	has the property that 
\begin{equation} \label{average}
Fix(\,T_\lambda)=Fix\,(T).
\end{equation}

%The averaged mapping $T_\lambda$ given by \eqref{def_T.lambda} and property \eqref{average} appear to have been first used by Krasnoselskij \cite{Kra55}, in the particular case $\lambda=\dfrac{1}{2}$, in order to approximate fixed points of nonexpansive mappings, see also \cite{BroP67}, where the property \eqref{average} of the averaged mapping $T_\lambda$ is included in the Corollary to Theorem 5.

\begin{definition}\label{bogat}
Let $(X,\|\cdot\|)$ be a linear normed space and let $\mathcal{C}$ be a subset of the family of all self mappings of $X$. A mapping $T:X\rightarrow X$ is said to be  $\mathcal{C}${\em -enriched} or {\em enriched with respect to} $\mathcal{C}$ if there exists $\lambda\in (0,1]$ such  $T_\lambda\in \mathcal{C}$. 

We denote by $\mathcal{C}^{e}$ the set of all enriched mappings with respect to $\mathcal{C}$.
\end{definition}

\begin{remark}
From Definition \ref{bogat} it immediately follows that $\mathcal{C}\subseteq \mathcal{C}^{e}$.
\end{remark}

\begin{definition}\label{sat}
Let $\mathcal{C}$ be a subset of the family of all self mappings of $X$. We say that $\mathcal{C}$  is an {\em unsaturated class of mappings} if the inclusion $\mathcal{C}\subset \mathcal{C}^{e}$ is strict, while, if $\mathcal{C}= \mathcal{C}^{e}$, then we say that $\mathcal{C}$ is a {\em saturated} class of mappings.
\end{definition}

The main aim of this section is to present some important examples of unsaturated classes of contractive mappings, each of them with a relevant new fixed point result.

\subsection{Banach contractions} 

Let $(X,d)$ be a metric space and let $\mathcal{C}_{B}(X)$ denote the class of Banach contractions defined on $X$, that is, the class of all mappings which satisfy  a contractive condition of the form \eqref{cond_Banach} on $X$. The concept of {\it enriched Banach contraction} has been introduced and studied in \cite{BerP20}. 

\begin{definition} (Definition 2.1, \cite{BerP20})\label{def1}
Let $(X,\|\cdot\|)$ be a linear normed space. A mapping $T:X\rightarrow X$ is said to be a $(b,\theta$)-{\it enriched contraction} if there exist $b\in[0,+\infty)$ and $\theta\in[0,b+1)$ such that
\begin{equation} \label{eq3}
\|b(x-y)+Tx-Ty\|\leq \theta \|x-y\|,\forall x,y \in X.
\end{equation}
\end{definition}

Obviously, any Banach contraction satisfies \eqref{eq3} with $b=0$.

\begin{proposition} \label{p1}
Let $(E,\|\cdot,\cdot\|)$ be a Banach space. Then $\mathcal{C}_{B}(E)$ is an unsaturated class of contractive mappings.
\end{proposition}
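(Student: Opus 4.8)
The plan is to verify the easy inclusion and then exhibit one explicit mapping witnessing strictness. By the Remark following Definition \ref{bogat} we already have $\mathcal{C}_{B}(E)\subseteq \mathcal{C}_{B}(E)^{e}$, so it suffices to produce a single $T:E\to E$ that is $\mathcal{C}_{B}(E)$-enriched but is not itself a Banach contraction. (We tacitly assume $E\neq\{0\}$; otherwise every self-map of $E$ is the identity and the two classes coincide trivially.)

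The witness I would use is the simplest nontrivial linear isometry, namely $Tx=-x$ for all $x\in E$. To see that $T\notin \mathcal{C}_{B}(E)$, note that $\|Tx-Ty\|=\|(-x)-(-y)\|=\|x-y\|$ for all $x,y\in E$, so no constant $c\in[0,1)$ can satisfy \eqref{cond_Banach}; in fact $T$ is an exact isometry. To see that $T\in \mathcal{C}_{B}(E)^{e}$, take $\lambda=\tfrac12\in(0,1]$: then $T_{1/2}x=\tfrac12 x+\tfrac12(-x)=0$ for every $x\in E$, i.e. $T_{1/2}$ is the constant map $0$, which is a Banach contraction (it satisfies \eqref{cond_Banach} with $c=0$). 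Equivalently, in the language of Definition \ref{def1}, $T$ is a $(1,0)$-enriched contraction, since with $b=1$ and $\theta=0\in[0,b+1)$ one has $\|b(x-y)+Tx-Ty\|=\|(x-y)-(x-y)\|=0=\theta\|x-y\|$. Hence $T\in \mathcal{C}_{B}(E)^{e}\setminus\mathcal{C}_{B}(E)$, so the inclusion $\mathcal{C}_{B}(E)\subset \mathcal{C}_{B}(E)^{e}$ is strict, which is exactly the definition of $\mathcal{C}_{B}(E)$ being unsaturated. If one wants a whole family rather than a single example, the same computation shows that $Tx=\alpha x$ with $\alpha\le -1$ is $\mathcal{C}_{B}(E)$-enriched for every sufficiently small $\lambda\in(0,1]$, while never being a Banach contraction.

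There is no genuine obstacle here: the statement is a pure existence claim, and the only point requiring care is choosing an example for which \emph{both} facts are transparent, namely the failure of \eqref{cond_Banach} and the membership of some average $T_{\lambda}$ in $\mathcal{C}_{B}(E)$. The map $x\mapsto -x$ is ideal precisely because it is an exact isometry — hence manifestly outside $\mathcal{C}_{B}(E)$ — yet its midpoint average collapses to a constant map, which lies well inside $\mathcal{C}_{B}(E)$.
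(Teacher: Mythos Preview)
Your proof is correct and follows essentially the same approach as the paper: exhibit a concrete isometry whose averaged map $T_\lambda$ is a Banach contraction. The paper's witness is $Tx=1-x$ on $[0,1]$ (Example~\ref{ex1}), while yours is $Tx=-x$ on all of $E$; these are the same example up to an affine shift, and your version has the minor advantage of being a self-map of the arbitrary Banach space $E$ in the statement rather than of an interval.
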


\begin{proof} 
Denoting $\lambda=\dfrac{1}{b+1}$, by \eqref{eq3} we obtain 
$$
\|T_\lambda x-T_\lambda y\|\leq c \cdot \|x-y\|,\forall x,y \in X,
$$
which shows that $T_\lambda\in \mathcal{C}_{B}(E)$, where we denoted $c=\lambda \theta<1$.

The fact that the inclusion $\mathcal{C}_{B}(X)\subset \mathcal{C}^{e}_{B}(X)$ is strict, and hence $\mathcal{C}_{B}(X)$ is unsaturated, follows by the next example.  

\end{proof}

\begin{example}  \label{ex1}
Let $X=[0,1]$ be endowed with the usual norm and let $T:X\rightarrow X$ be defined by $Tx=1-x$, for all $x\in [0,1]$. Then $T\notin \mathcal{C}_{B}(X)$, just take  $x=0$, $y=1$ in \eqref{cond_Banach} to get the contradiction $1\leq c<1$.

But $T\in \mathcal{C}^{e}_{B}(X)$ as  $T_\lambda\in \mathcal{C}_{B}(X)$ for any $\lambda\in(0,\dfrac{1}{2})$. Note also that $Fix\,(T)=\left\{\dfrac{1}{2}\right\}$.
\end{example}

Proposition \ref{p1} now guaranties that the next theorem, the main result in \cite{BerP20}, is a genuine generalization of the Banach contraction principle in the setting of Banach spaces.

\begin{theorem} \label{thB}
Let $(X,\|\cdot\|)$ be a Banach space and $T:X\rightarrow X$ a $(b,\theta$)-{\it enriched contraction}. Then

$(i)$ $Fix\,(T)=\{p\}$, for some $p\in X$;

$(ii)$ There exists $\lambda\in (0,1]$ such that the iterative method
$\{x_n\}^\infty_{n=0}$, given by
$$
x_{n+1}=(1-\lambda)x_n+\lambda T x_n,\,n\geq 0,
$$
converges to p, for any $x_0\in X$;

$(iii)$ The following estimate holds
$$
\|x_{n+i-1}-p\| \leq\frac{c^i}{1-c}\cdot \|x_n-
x_{n-1}\|\,,\quad n=0,1,2,\dots;\,i=1,2,\dots,
$$
where $c=\dfrac{\theta}{b+1}$.
\end{theorem}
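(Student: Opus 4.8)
The plan is to deduce all three assertions from the classical Banach contraction mapping principle applied to the averaged operator $T_\lambda$, in the spirit of the proof of Proposition \ref{p1}. First I would choose $\lambda=\dfrac{1}{b+1}\in(0,1]$, record the elementary identity $1-\lambda=\lambda b$, and combine it with \eqref{def_T.lambda} to obtain, for all $x,y\in X$,
$$
T_\lambda x-T_\lambda y=(1-\lambda)(x-y)+\lambda(Tx-Ty)=\lambda\big(b(x-y)+Tx-Ty\big).
$$
Taking norms and invoking the enriched contraction condition \eqref{eq3} then gives $\|T_\lambda x-T_\lambda y\|\le \lambda\theta\,\|x-y\|=c\,\|x-y\|$ with $c=\dfrac{\theta}{b+1}\in[0,1)$, since $\theta<b+1$; hence $T_\lambda$ is an ordinary Banach contraction on $X$ (this is exactly the estimate already obtained in the proof of Proposition \ref{p1}).

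Next, since $(X,\|\cdot\|)$ is a Banach space, hence a complete metric space, the Banach contraction mapping principle yields that $T_\lambda$ is a Picard operator: it has a unique fixed point $p\in X$, and the Picard iteration $x_{n+1}=T_\lambda x_n$ converges to $p$ from any starting point $x_0\in X$. Using the fundamental property \eqref{average}, $Fix(T)=Fix(T_\lambda)=\{p\}$, which is $(i)$. By \eqref{def_T.lambda} the Picard iteration of $T_\lambda$ is precisely $x_{n+1}=(1-\lambda)x_n+\lambda Tx_n$, so $(ii)$ holds, in fact with the explicit value $\lambda=\dfrac{1}{b+1}$. Finally, $(iii)$ is the standard a posteriori error estimate for contractions: from $\|x_{k+1}-x_k\|\le c\,\|x_k-x_{k-1}\|$ one gets $\|x_{n+k}-x_{n+k-1}\|\le c^{\,k}\|x_n-x_{n-1}\|$, and summing the geometric tail gives $\|x_m-p\|\le\dfrac{c^{\,m-n+1}}{1-c}\|x_n-x_{n-1}\|$, which is exactly the claimed bound after setting $m=n+i-1$.

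There is no substantial obstacle here: the whole content sits in the algebraic identity $1-\lambda=\lambda b$, which converts the asymmetric-looking enriched condition \eqref{eq3} into a genuine contraction estimate for $T_\lambda$, after which the classical theory does the rest. The only points to check with some care are that $\lambda=\dfrac{1}{b+1}$ indeed lies in $(0,1]$ so that $T_\lambda$ is a well-defined self-map (immediate, since the ambient convex set is the whole space $X$), and the degenerate subcase $\theta=0$, in which $T_\lambda$ is constant and $p$ is reached in one step — this is already subsumed by the estimate above.
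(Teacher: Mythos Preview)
Your proposal is correct and follows exactly the approach the paper itself sets up: the paper does not include a separate proof of Theorem~\ref{thB} (it is quoted from \cite{BerP20}), but the reduction you carry out is precisely the one sketched in the proof of Proposition~\ref{p1}, namely setting $\lambda=\dfrac{1}{b+1}$, rewriting \eqref{eq3} as $\|T_\lambda x-T_\lambda y\|\le c\,\|x-y\|$ with $c=\lambda\theta=\dfrac{\theta}{b+1}$, and then invoking the classical Banach principle together with \eqref{average}. The remaining parts~$(ii)$ and~$(iii)$ are exactly the standard Picard convergence and error bounds for $T_\lambda$, so nothing is missing.
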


\subsection{Kannan contractions} 

Let $(X,d)$ be a metric space and let $\mathcal{C}_{K}(X)$ denote the class of Kannan contractions defined on $X$, that is, the class of all mappings which satisfy  a contractive condition of the form \eqref{cond_Kannan} on $X$. The concept of {\it enriched Kannan contraction} has been introduced and studied in \cite{BerP21}.

\begin{definition} (Definition 2.1, \cite{BerP21})\label{def2}
Let $(X,\|\cdot\|)$ be a linear normed space. A mapping $T:X\rightarrow X$ is said to be a $(k,a$)-{\it enriched Kannan mapping} if there exist  $a\in[0,1/2)$ and $k\in[0,\infty)$ such that
\begin{equation} \label{cond_eKannan}
\|k(x-y)+Tx-Ty\|\leq a \left(\|x-Tx\|+\|y-Ty\|\right),\textnormal{ for all } x,y \in X.
\end{equation}
\end{definition}

Obviously, any Kannan mapping satisfies \eqref{cond_eKannan} with $k=0$.

\begin{proposition} \label{p2}
Let $(X,\|\cdot,\cdot\|)$ be a Banach space. Then $\mathcal{C}_{K}(X)$ is an unsaturated class of contractive mappings.
\end{proposition}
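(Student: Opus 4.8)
The plan is to argue exactly as in the proof of Proposition \ref{p1}. Since $\mathcal{C}_{K}(X)\subseteq \mathcal{C}^{e}_{K}(X)$ already holds by the Remark following Definition \ref{bogat}, it suffices to (a) record the link between Definition \ref{def2} and the abstract enriched class, and (b) exhibit one mapping lying in $\mathcal{C}^{e}_{K}(X)$ but not in $\mathcal{C}_{K}(X)$. For (a): if $T$ is a $(k,a)$-enriched Kannan mapping, I would set $\lambda=\dfrac{1}{k+1}\in(0,1]$. From the definition \eqref{def_T.lambda} of $T_\lambda$ one has the two identities
\[
T_\lambda x-T_\lambda y=\frac{1}{k+1}\bigl(k(x-y)+Tx-Ty\bigr)
\quad\text{and}\quad
x-T_\lambda x=\lambda\,(x-Tx),
\]
so that dividing \eqref{cond_eKannan} by $k+1$ and using $\tfrac{1}{\lambda}=k+1$ turns it into
$\|T_\lambda x-T_\lambda y\|\le a\bigl(\|x-T_\lambda x\|+\|y-T_\lambda y\|\bigr)$; hence $T_\lambda$ is a Kannan mapping with the same constant $a$, i.e.\ $T\in\mathcal{C}^{e}_{K}(X)$. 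This is precisely what guarantees that the fixed point theorem for enriched Kannan mappings of \cite{BerP21} genuinely extends Theorem \ref{th-K}.

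For (b) I would reuse the mapping of Example \ref{ex1}: take $X=[0,1]$ with the usual norm and $Tx=1-x$. Putting $x=\tfrac12$ in \eqref{cond_Kannan} forces $|\tfrac12-y|\le 2a\,|y-\tfrac12|$ for every $y\in[0,1]$, i.e.\ $1\le 2a$, contradicting $a<1/2$; hence $T\notin\mathcal{C}_{K}(X)$. On the other hand $T_\lambda x=(1-2\lambda)x+\lambda$, so $T_\lambda x-T_\lambda y=(1-2\lambda)(x-y)$ and $x-T_\lambda x=2\lambda\bigl(x-\tfrac12\bigr)$; since $|x-y|\le |x-\tfrac12|+|y-\tfrac12|$ on $[0,1]$, the Kannan inequality for $T_\lambda$ is satisfied as soon as $1-2\lambda\le 2\lambda\cdot a'$ for some $a'\in[0,1/2)$, which holds for every $\lambda\in(\tfrac13,\tfrac12]$ — in particular for $\lambda=\tfrac12$, where $T_{1/2}\equiv\tfrac12$ is the (trivially Kannan) constant map. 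Thus $T\in\mathcal{C}^{e}_{K}(X)\setminus\mathcal{C}_{K}(X)$, the inclusion $\mathcal{C}_{K}(X)\subset\mathcal{C}^{e}_{K}(X)$ is strict, and $\mathcal{C}_{K}(X)$ is unsaturated. As in Example \ref{ex1}, one also has $Fix\,(T)=Fix\,(T_\lambda)=\{\tfrac12\}$, consistently with \eqref{average}.

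The argument is essentially computational and I do not expect a genuine obstacle: the averaging identities are linear and the division of \eqref{cond_eKannan} by $k+1$ mimics verbatim the Banach case of Proposition \ref{p1}. The only point needing a little care is step (b), namely the verification that the averaged map $T_\lambda$ of the example is indeed a Kannan mapping; this is handled either by tracking the admissible range of $\lambda$ via the elementary bound $|x-y|\le|x-\tfrac12|+|y-\tfrac12|$, or, most economically, by specialising to $\lambda=\tfrac12$ so that $T_\lambda$ degenerates into a constant map.
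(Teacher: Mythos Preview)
Your proposal is correct and follows essentially the same approach as the paper: part~(a) reproduces verbatim the paper's reduction of \eqref{cond_eKannan} to the Kannan condition for $T_\lambda$ with $\lambda=\dfrac{1}{k+1}$, and part~(b) uses the very same example $Tx=1-x$ on $[0,1]$ (the paper's Example~\ref{ex2}) to establish strictness. The only cosmetic differences are that you verify $T\notin\mathcal{C}_K(X)$ explicitly and that you identify the admissible range $\lambda\in\bigl(\tfrac13,\tfrac12\bigr]$ (highlighting the shortcut $\lambda=\tfrac12$, where $T_\lambda$ is constant), whereas the paper cites \cite{BerP21} and parametrizes instead by $k=1-2a$, i.e.\ $\lambda=\dfrac{1}{2-2a}\in\bigl[\tfrac12,1\bigr)$.
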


\begin{proof}
Take $\lambda=\dfrac{1}{k+1}\in(0,1]$. Then $k=1/\lambda-1$ and thus the contractive condition \eqref{cond_eKannan} becomes
$$
\left \|\left(\frac{1}{\lambda}-1\right)(x-y)+Tx-Ty\right\|\leq a \left(\|x-Tx\|+\|y-Ty\|\right),\textnormal{ for all } x,y \in X,
$$
which can be written in an equivalent form as
$$
\|T_\lambda x-T_\lambda y\|\leq a  \left(\|x-T_\lambda x\|+\|y-T_\lambda y\|\right),\textnormal{ for all } x,y \in X,
$$
and this shows that $T_\lambda$ is a Kannan mapping. To prove that the inclusion $\mathcal{C}_{K}(X)\subset \mathcal{C}^{e}_{K}(X)$ is in general strict, we use the next example.
\end{proof}

\begin{example}  \label{ex2}
Let $X$ and $T$ be as in Example \ref{ex1}. Then, see Example 2.1 in \cite{BerP21}, for any $a\in[0,1/2)$, $T$ is a $(1-2a,a)$-enriched Kannan mapping, which means that for $\lambda=\dfrac{1}{2-2a}$, $T_\lambda\in \mathcal{C}_{K}(X)$. 
\end{example}

Proposition \ref{p2} now guaranties that the next theorem, the main result in \cite{BerP21}, is a genuine generalization of the Kannan fixed point theorem in the setting of Banach spaces.

\begin{theorem} \label{thK}
Let $(X,\|\cdot\|)$ be a Banach space and $T:X\rightarrow X$ a $(k,a$)-{\it enriched Kannan mapping}. Then

$(i)$ $Fix\,(T)=\{p\}$, for some $p\in X$;

$(ii)$ There exists $\lambda\in (0,1]$ such that the iterative method
$\{x_n\}^\infty_{n=0}$, given by
$$
x_{n+1}=(1-\lambda)x_n+\lambda T x_n,\,n\geq 0,
$$
converges to p, for any $x_0\in X$;

$(iii)$ The following estimate holds
$$
\|x_{n+i-1}-p\| \leq\frac{\delta^i}{1-\delta}\cdot \|x_n-
x_{n-1}\|\,,\quad n=0,1,2,\dots;\,i=1,2,\dots
$$
where $\delta=\dfrac{a}{1-a}$.

\end{theorem}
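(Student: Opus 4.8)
The plan is to reduce the whole statement to the classical Kannan theorem via the averaged mapping, reusing the computation already carried out in the proof of Proposition \ref{p2}, and then to transfer the conclusions back to $T$ by means of the fixed point invariance \eqref{average}. First I would put $\lambda=\dfrac{1}{k+1}\in(0,1]$, so that, exactly as in the proof of Proposition \ref{p2}, the enriched Kannan condition \eqref{cond_eKannan} is equivalent to
$$
\|T_\lambda x-T_\lambda y\|\leq a\left(\|x-T_\lambda x\|+\|y-T_\lambda y\|\right),\quad\text{for all } x,y\in X,
$$
that is, $T_\lambda$ is an ordinary Kannan mapping on $X$ with the same constant $a\in[0,1/2)$.

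Since $(X,\|\cdot\|)$ is a Banach space, the metric induced by the norm is complete, so Theorem \ref{th-K} applies to $T_\lambda$ and yields that $T_\lambda$ is a Picard operator: $Fix\,(T_\lambda)=\{p\}$ for some $p\in X$, and $T_\lambda^n x_0\to p$ as $n\to\infty$ for every $x_0\in X$. Invoking \eqref{average}, I obtain $Fix\,(T)=Fix\,(T_\lambda)=\{p\}$, which is precisely $(i)$. For $(ii)$, I would observe that the iteration $x_{n+1}=(1-\lambda)x_n+\lambda T x_n$ is nothing but $x_{n+1}=T_\lambda x_n$, whence $x_n=T_\lambda^n x_0$; the Picard property of $T_\lambda$ then gives $x_n\to p$ for an arbitrary starting point $x_0$, with this very $\lambda=\dfrac{1}{k+1}$.

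It remains to prove the estimate $(iii)$, which is the only part requiring an actual computation rather than a citation. Applying the Kannan condition for $T_\lambda$ to the consecutive iterates $x_n=T_\lambda x_{n-1}$ and $x_{n+1}=T_\lambda x_n$ I get
$$
\|x_{n+1}-x_n\|\leq a\left(\|x_n-x_{n+1}\|+\|x_{n-1}-x_n\|\right),
$$
and solving for $\|x_{n+1}-x_n\|$ produces $\|x_{n+1}-x_n\|\leq \delta\,\|x_n-x_{n-1}\|$ with $\delta=\dfrac{a}{1-a}$, which lies in $[0,1)$ precisely because $a<1/2$. Iterating this one-step contraction of consecutive displacements gives $\|x_{m+1}-x_m\|\leq \delta^{\,m-n+1}\|x_n-x_{n-1}\|$ for $m\geq n-1$, and then summing the telescoping tail $\|x_{n+i-1}-p\|\leq \sum_{m\geq n+i-1}\|x_{m+1}-x_m\|$ as a geometric series in $\delta$ delivers the claimed bound
$$
\|x_{n+i-1}-p\|\leq \frac{\delta^i}{1-\delta}\,\|x_n-x_{n-1}\|.
$$

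The substance of the result is thus already packaged in Proposition \ref{p2} and the invariance \eqref{average}, so I expect no serious obstacle. The only point demanding care is the index bookkeeping in $(iii)$: one must track which displacement is being iterated so that the power of $\delta$ matches the shift $i$ appearing in the statement, and one must confirm $\delta<1$ (equivalently $a<1/2$) both to sum the geometric series and to guarantee that the limit $p$ coincides with the unique fixed point furnished by Theorem \ref{th-K}.
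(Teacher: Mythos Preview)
Your proposal is correct. Note that the paper does not actually supply a proof of Theorem~\ref{thK}; it is quoted from \cite{BerP21}, and what the paper does prove is only Proposition~\ref{p2}. Your argument is precisely the route the paper's framework points to: reduce to the classical Kannan setting via $T_\lambda$ with $\lambda=\dfrac{1}{k+1}$, invoke Theorem~\ref{th-K} and \eqref{average} for $(i)$ and $(ii)$, and then derive the error estimate $(iii)$ from the one-step inequality $\|x_{n+1}-x_n\|\le\delta\|x_n-x_{n-1}\|$ by a geometric-series tail sum. The index bookkeeping in your step~7--8 is right (the exponent $m-n+1$ does collapse to $\delta^i/(1-\delta)$ when the sum starts at $m=n+i-1$), so nothing further is needed.
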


\subsection{Chatterjea contractions} 

Let $(X,d)$ be a metric space and let $\mathcal{C}_{C}(X)$ denote the class of Chatterjea contractions defined on $X$, that is, the class of all mappings which satisfy  a contractive condition of the form \eqref{Def_Chatterjea} on $X$. The concept of {\it enriched Chatterjea contraction} has been introduced and studied in \cite{BerP19c}.

\begin{definition} (Definition 2.1, \cite{BerP19c})\label{def3}

Let $(X,\|\cdot\|)$ be a linear normed space. A mapping $T:X\rightarrow X$ is said to be an $(k,b$)-{\it enriched Chatterjea mapping} if there exist $b\in[0,1/2)$ and $k\in[0,+\infty)$  such that
$$
\|k(x-y)+Tx-Ty\|\leq  b \left[\|(k+1)(x-y)+y-Ty\|+\right.
$$
\begin{equation} \label{Def_eChatterjea}
\left.+\|(k+1)(y-x)+x-Tx\|\right]\|,\forall x,y \in X.
\end{equation}
\end{definition}

Obviously, any Chatterjea mapping satisfies \eqref{Def_eChatterjea} with $k=0$.

\begin{proposition} \label{p3}
Let $(X,\|\cdot,\cdot\|)$ be a Banach space. Then $\mathcal{C}_{C}(X)$ is an unsaturated class of contractive mappings.
\end{proposition}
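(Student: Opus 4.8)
The plan is to mirror the arguments used for Propositions \ref{p1} and \ref{p2}. The first step is to record the exact correspondence between the $(k,b)$-enriched Chatterjea mappings of Definition \ref{def3} and the members of $\mathcal{C}_{C}^{e}(X)$. Put $\lambda=\dfrac{1}{k+1}\in(0,1]$, so that $k=\dfrac{1}{\lambda}-1$ and $k+1=\dfrac{1}{\lambda}$. Using only the identity $T_\lambda z=(1-\lambda)z+\lambda Tz$ one checks the three elementary relations
\[
k(x-y)+Tx-Ty=\tfrac{1}{\lambda}\big(T_\lambda x-T_\lambda y\big),
\]
\[
(k+1)(x-y)+y-Ty=\tfrac{1}{\lambda}\big(x-T_\lambda y\big),\qquad (k+1)(y-x)+x-Tx=\tfrac{1}{\lambda}\big(y-T_\lambda x\big).
\]
Substituting these into \eqref{Def_eChatterjea} and cancelling the common positive factor $\dfrac{1}{\lambda}$ turns the enriched Chatterjea condition into
\[
\|T_\lambda x-T_\lambda y\|\leq b\big(\|x-T_\lambda y\|+\|y-T_\lambda x\|\big),\qquad\forall x,y\in X,
\]
that is, $T_\lambda$ is a Chatterjea mapping with the same constant $b\in[0,1/2)$; hence $T\in\mathcal{C}_{C}^{e}(X)$, and conversely every element of $\mathcal{C}_{C}^{e}(X)$ is a $(k,b)$-enriched Chatterjea mapping.

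The inclusion $\mathcal{C}_{C}(X)\subseteq\mathcal{C}_{C}^{e}(X)$ is automatic (take $\lambda=1$ in Definition \ref{bogat}, cf. the Remark following it), so it remains to show it is strict. For this I would reuse the map of Example \ref{ex1}: let $X=[0,1]$ with the usual norm and $Tx=1-x$. Choosing $x=0$ and $y=1$ in \eqref{Def_Chatterjea} forces $1=d(T0,T1)\leq b\,(d(0,T1)+d(1,T0))=0$, which is impossible, so $T\notin\mathcal{C}_{C}(X)$. On the other hand $T_{1/2}x=\tfrac12 x+\tfrac12(1-x)=\tfrac12$ is a constant map, hence trivially a Chatterjea mapping (with $b=0$), so $T$ is a $(1,0)$-enriched Chatterjea mapping and therefore $T\in\mathcal{C}_{C}^{e}(X)$. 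This yields $\mathcal{C}_{C}(X)\subsetneq\mathcal{C}_{C}^{e}(X)$, so $\mathcal{C}_{C}(X)$ is unsaturated.

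The only delicate point I anticipate is the first step: in contrast with the Banach and Kannan cases, the right-hand side of \eqref{Def_eChatterjea} is already written with the ``$k$-twisted'' expressions $(k+1)(x-y)+y-Ty$ instead of the plain displacement $d(y,Ty)$, so one must verify carefully that such a twisted expression equals $\tfrac1\lambda(x-T_\lambda y)$ --- and not, say, $\tfrac1\lambda(y-T_\lambda y)$ --- before the cancellation can be performed. Once this identification is made, the rest is a routine substitution together with the concluding example already exploited in Examples \ref{ex1} and \ref{ex2}.
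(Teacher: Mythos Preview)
Your proof is correct and follows essentially the same route as the paper: you show that \eqref{Def_eChatterjea} is equivalent, via $\lambda=\dfrac{1}{k+1}$, to the ordinary Chatterjea condition for $T_\lambda$, and then exhibit the map $Tx=1-x$ on $[0,1]$ as a witness that the inclusion is strict. Your verification of the example is in fact cleaner than the paper's: observing that $T_{1/2}\equiv\tfrac12$ is constant (hence trivially Chatterjea with $b=0$, i.e.\ $T$ is a $(1,0)$-enriched Chatterjea mapping) avoids the somewhat delicate sufficient-condition argument the paper carries out for general $k$.
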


\begin{proof}
If we take $\lambda=\dfrac{1}{k+1}$, then  we have $0<\lambda<1$ and by the contractive condition \eqref{Def_eChatterjea} we obtain
$$
\|T_\lambda x-T_\lambda y\|\leq b  \left[\|x-T_\lambda y\|+\|y-T_\lambda x\|\right],\forall x,y \in X,
$$
with $b\in[0,1/2)$, which shows that $T_\lambda\in \mathcal{C}_{C}(X)$. 

To prove that the inclusion $\mathcal{C}_{C}(X)\subset \mathcal{C}^{e}_{C}(X)$ is general strict, we use the next example.
\end{proof}

\begin{example}  \label{ex3}
Let $X=[0,1]$ be endowed with the usual norm and $T:X\rightarrow X$ be defined by $Tx=1-x$, for all $x\in [0,1]$. If $T$ would be a Chatterjea mapping, then, in view of \eqref{Def_Chatterjea}, there would exist $b\in[0,1/2)$ such that for all $x,y \in [0,1]$,
$$
|x-y|\leq 2b \cdot |x+y-1|,
$$
which, for $x=0$ and $y=1$, yields the contradiction $1\leq 0$. 

So, $T$ is  not a  Chatterjea mapping but it is an enriched Chatterjea mapping. 
Indeed, the enriched Chatterjea  condition \eqref{Def_eChatterjea} is in this case equivalent to
\begin{equation} \label{Def_Chatterjeab}
|(k-1)(x-y)|\leq b \left[|(k+1)x-(k-1)y-1|+|(k+1)y-(k-1)x-1|\right].
\end{equation} 
Having in view that
$$
2k |x-y|=|[(k+1)x-(k-1)y-1]-[(k+1)y-(k-1)x-1]|
$$
$$
\leq |(k+1)x-(k-1)y-1|+|(k+1)y-(k-1)x-1|,
$$
in order to have \eqref{Def_Chatterjeab} satisfied for all $x,y\in [0,1]$, it is necessary to have $\dfrac{|k-1|}{2k}\leq b $, for a certain $b \in[0,1/2)$.

The only possibility is to have $k<1$ when, by taking $\dfrac{1-k}{2k}= b$, one obtains $k=\dfrac{1}{b+2}$. Therefore, for any $b\in[0,1/2)$ and $\lambda=\dfrac{b+2}{b+3}$, $T_\lambda\in \mathcal{C}_{C}(E)$.
\end{example}

Proposition \ref{p3} now guaranties that the next theorem, the main result in \cite{BerP19c}, is a genuine generalization of the Chatterjea fixed point theorem in the setting of Banach spaces.

\begin{theorem} \label{thC}
Let $(X,\|\cdot\|)$ be a Banach space and $T:X\rightarrow X$ a $(k,b$)-{\it enriched Chatterjea mapping}. Then

$(i)$ $Fix\,(T)=\{p\}$;

$(ii)$ There exists $\lambda\in (0,1]$ such that the iterative method
$\{x_n\}^\infty_{n=0}$, given by
$$
x_{n+1}=(1-\lambda)x_n+\lambda T x_n,\,n\geq 0,
$$
converges to p, for any $x_0\in X$;

$(iii)$ The following estimate holds
$$
\|x_{n+i-1}-p\| \leq\frac{\delta^i}{1-\delta}\cdot \|x_n-
x_{n-1}\|\,,\quad n=0,1,2,\dots;\,i=1,2,\dots
$$
where $\delta=\dfrac{b}{1-b}$.

\end{theorem}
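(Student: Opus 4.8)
The plan is to reduce the entire statement to the classical Chatterjea fixed point theorem by passing to the averaged mapping. First I would invoke Proposition \ref{p3}: setting $\lambda=\dfrac{1}{k+1}\in(0,1]$, the computation carried out there shows that the averaged operator $T_\lambda=(1-\lambda)I+\lambda T$ satisfies $\|T_\lambda x-T_\lambda y\|\leq b\left[\|x-T_\lambda y\|+\|y-T_\lambda x\|\right]$ for all $x,y\in X$, with the same constant $b\in[0,1/2)$. In other words, $T_\lambda$ is a genuine Chatterjea mapping. Since $(X,\|\cdot\|)$ is a Banach space, it is complete as a metric space, so the Chatterjea fixed point theorem — the analogue of Theorem \ref{th-K} announced right after condition \eqref{Def_Chatterjea} — applies to $T_\lambda$ and guarantees that $T_\lambda$ is a Picard operator.

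Next I would transfer this information back to $T$. The Chatterjea theorem yields a unique point $p$ with $Fix\,(T_\lambda)=\{p\}$, and the fundamental property \eqref{average} of the averaged mapping gives $Fix\,(T)=Fix\,(T_\lambda)=\{p\}$, which is exactly $(i)$. For $(ii)$, I would observe that the Picard iteration of $T_\lambda$ is $x_{n+1}=T_\lambda x_n=(1-\lambda)x_n+\lambda Tx_n$, which is precisely the Krasnoselskij iteration displayed in the statement; its convergence to $p$ for every starting point $x_0\in X$ is part of the conclusion that $T_\lambda$ is a Picard operator, so $(ii)$ follows with this same $\lambda$.

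For the error estimate $(iii)$ I would work directly with the orbit $x_{n+1}=T_\lambda x_n$ and the Chatterjea inequality for $T_\lambda$. Taking $x=x_n$, $y=x_{n-1}$ there and using $T_\lambda x_n=x_{n+1}$, $T_\lambda x_{n-1}=x_n$, the term $\|x_n-T_\lambda x_{n-1}\|=\|x_n-x_n\|$ vanishes, so the inequality collapses to $\|x_{n+1}-x_n\|\leq b\|x_{n-1}-x_{n+1}\|\leq b\left(\|x_{n-1}-x_n\|+\|x_n-x_{n+1}\|\right)$; solving for $\|x_{n+1}-x_n\|$ and using $b<1/2$ yields the key self-improving bound $\|x_{n+1}-x_n\|\leq\delta\|x_n-x_{n-1}\|$ with $\delta=\dfrac{b}{1-b}<1$. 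Iterating gives $\|x_{n+j}-x_{n+j-1}\|\leq\delta^{\,j}\|x_n-x_{n-1}\|$, and summing the telescoping tail $\|x_{n+i-1}-p\|\leq\sum_{j\geq i}\|x_{n+j}-x_{n+j-1}\|$ together with the geometric series produces the claimed estimate $\|x_{n+i-1}-p\|\leq\dfrac{\delta^i}{1-\delta}\|x_n-x_{n-1}\|$.

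I do not expect a serious obstacle here: the enrichment device has already done the heavy lifting by reducing $T$ to a classical Chatterjea mapping $T_\lambda$, and the only point demanding care is the derivation of the contraction-type bound for consecutive iterates, where the vanishing of the diagonal term $\|x_n-T_\lambda x_{n-1}\|$ is what makes the dual Chatterjea condition behave like an ordinary contraction with ratio $\delta=\dfrac{b}{1-b}$. One should also double check that this same $\delta$ is the constant appearing in $(iii)$ and that $\delta<1$ is equivalent to $b<1/2$, which closes the argument.
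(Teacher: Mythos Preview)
Your argument is correct. Note, however, that the present paper does not actually supply a proof of Theorem~\ref{thC}: it is quoted as ``the main result in \cite{BerP19c}'' and stated without proof, like the other theorems in this survey section. That said, your reduction is exactly the one the paper has prepared for: Proposition~\ref{p3} already shows that, with $\lambda=\dfrac{1}{k+1}$, the averaged operator $T_\lambda$ is a genuine Chatterjea mapping, and property~\eqref{average} then transfers the fixed point from $T_\lambda$ to $T$. Your derivation of the rate in $(iii)$ via the vanishing term $\|x_n-T_\lambda x_{n-1}\|=0$ and the resulting recursion $\|x_{n+1}-x_n\|\le\delta\|x_n-x_{n-1}\|$ with $\delta=\dfrac{b}{1-b}$ is the standard one and matches the constant announced in the statement; this is undoubtedly the line of proof in \cite{BerP19c} as well.
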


\subsection{Almost contractions}

Let $(X,d)$ be a metric space and let $\mathcal{C}_{AC}(X)$ denote the class of almost contractions defined on $X$, that is, the class of all mappings which satisfy  a contractive condition of the form \eqref{2.1} on $X$. The concept of {\it enriched almost contraction} has been introduced and studied in \cite{arhiv}. 

\begin{definition} (Definition 3, \cite{arhiv})\label{def4}
Let $(X,\|\cdot\|)$ be a linear normed space. A mapping $T:X\rightarrow X$ is said to be an {\it enriched} $(b,\theta, L)$-{\it  almost contraction} if there exist $b\in[0,\infty)$, $\theta\in (0,b+1)$ and $L\geq 0$ such that
\begin{equation} \label{eq3a}
\|b(x-y)+Tx-Ty\|\leq \theta \|x-y\|+L\|b(x-y)+Tx-y\|,\,\forall x,y\in X.
\end{equation}
for all $x,y \in X$.
\end{definition} 
Obviously, any almost contraction satisfies \eqref{eq3a} with $b=0$.
\begin{proposition} \label{p4a}
Let $(X,\|\cdot,\cdot\|)$ be a Banach space. Then $\mathcal{C}_{AC}(X)$ is an unsaturated class of contractive mappings.
\end{proposition}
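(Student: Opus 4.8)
The plan is to follow the same two-step scheme that proves Propositions \ref{p1}, \ref{p2} and \ref{p3}. In the first step I would show that, for every enriched $(b,\theta,L)$-almost contraction $T$, a suitably averaged map $T_\lambda$ belongs to $\mathcal{C}_{AC}(X)$, which already gives $\mathcal{C}_{AC}(X)\subseteq\mathcal{C}^{e}_{AC}(X)$; in the second step I would exhibit an explicit $T\in\mathcal{C}^{e}_{AC}(X)\setminus\mathcal{C}_{AC}(X)$, forcing the inclusion to be strict, so that $\mathcal{C}_{AC}(X)$ is unsaturated.

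For the first step, set $\lambda=\dfrac{1}{b+1}\in(0,1]$, so that $b=\dfrac{1-\lambda}{\lambda}$. Starting from $T_\lambda u=(1-\lambda)u+\lambda Tu$ one obtains at once the two identities
\[
b(x-y)+Tx-Ty=\frac{1}{\lambda}\bigl(T_\lambda x-T_\lambda y\bigr),\qquad b(x-y)+Tx-y=\frac{1}{\lambda}\bigl(T_\lambda x-y\bigr).
\]
Substituting both into \eqref{eq3a} and multiplying through by $\lambda$ turns the enriched condition into
\[
\|T_\lambda x-T_\lambda y\|\le \lambda\theta\,\|x-y\|+L\,\|T_\lambda x-y\|,\qquad\forall x,y\in X ;
\]
since $\theta\in(0,b+1)$, the coefficient $\delta:=\lambda\theta=\dfrac{\theta}{b+1}$ lies in $(0,1)$, and as $\|T_\lambda x-y\|=\|y-T_\lambda x\|$ this is precisely condition \eqref{2.1} for $T_\lambda$. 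Hence $T_\lambda$ is a $(\delta,L)$-almost contraction, i.e. $T\in\mathcal{C}^{e}_{AC}(X)$, and in particular $\mathcal{C}_{AC}(X)\subseteq\mathcal{C}^{e}_{AC}(X)$.

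For the strictness, I would reuse the mapping $T:[0,1]\to[0,1]$, $Tx=1-x$, from Example \ref{ex1}. It is not an almost contraction: putting $x=1$, $y=0$ in \eqref{2.1} gives $1=|Tx-Ty|\le\delta\,|x-y|+L\,|y-Tx|=\delta<1$, a contradiction, for any admissible $\delta$ and $L$. On the other hand, $\|b(x-y)+Tx-Ty\|=|b-1|\,|x-y|$, so for every fixed $b>1$ the map $T$ satisfies \eqref{eq3a} with $\theta=b-1\in(0,b+1)$ and $L=0$; equivalently, $T_\lambda\in\mathcal{C}_{AC}(X)$ for each $\lambda\in(0,1/2)$. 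Thus $T\in\mathcal{C}^{e}_{AC}(X)\setminus\mathcal{C}_{AC}(X)$ and the proof is complete.

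The computations involved are routine, so I do not anticipate a real obstacle; the only point deserving attention is the verification of the second identity above — that the ``almost'' displacement $\|b(x-y)+Tx-y\|$ rescales by exactly the factor $1/\lambda$ to $\|T_\lambda x-y\|$ — together with the remark that it is the strict hypothesis $\theta>0$ (rather than $\theta\ge0$) that guarantees $\delta=\lambda\theta>0$, hence that $T_\lambda$ genuinely belongs to $\mathcal{C}_{AC}(X)$.
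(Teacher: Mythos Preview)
Your argument is correct and follows the paper's scheme: the reduction via $\lambda=1/(b+1)$ and the two identities you record are exactly what the paper uses to obtain $T_\lambda\in\mathcal{C}_{AC}(X)$ with $\delta=\theta/(b+1)$. The only deviation is in the strictness step: the paper invokes Example~\ref{ex4} (the piecewise map on $[0,4/3]$ with $Fix\,(T)=\{1/2,1\}$) rather than $Tx=1-x$; your choice is simpler and the verification is cleaner, while the paper's example has the added virtue of illustrating that enriched almost contractions can have more than one fixed point, a phenomenon invisible in the Banach/Kannan/Chatterjea subclasses.
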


\begin{proof}
We put  $\lambda=\dfrac{1}{b+1}$. Then $0<\lambda<1$ and thus the contractive condition \eqref{eq3a} can be written equivalently in the form
$$
\|T_\lambda (x)-T_\lambda y\|\leq \delta \|x-y\|+L \|T_\lambda x-y\| ,\forall x,y \in X,
$$
where $\delta=\dfrac{\theta} {b+1}\in (0,1)$, and this shows that $T_\lambda\in \mathcal{C}_{AC}(X)$.
 
To prove that the inclusion $\mathcal{C}_{AC}(X)\subset \mathcal{C}^{e}_{AC}(X)$ is in general strict, we use the next example.
\end{proof}

\begin{example} (Example 3, \cite{arhiv}) \label{ex4}
Let $X=\left[0,\frac{4}{3}\right]$ with the usual norm and $T: X\rightarrow X$ be given by
$$
Tx=
\begin{cases}
1-x, \textnormal{ if } x\in \left[0,\frac{2}{3}\right)\\
\smallskip
2-x, \textnormal{ if } x\in \left[\frac{2}{3}, \frac{4}{3}\right].
\end{cases}
$$
Then, see Example 3 in \cite{arhiv} for details, $T\notin \mathcal{C}_{AC}(X)$ but $T_\lambda\in \mathcal{C}_{AC}(X)$ and hence $T\in \mathcal{C}^{e}_{AC}(X)$.

Note that in this case $Fix\,(T)=\left\{\frac{1}{2},1\right\}$.
\end{example}

Proposition \ref{p4a} now guaranties that the next theorem, the main result in \cite{arhiv}, is a genuine generalization of Theorem 1 \cite{Ber04}, in the setting of Banach spaces.

\begin{theorem} \label{thAC}
Let $(X,\|\cdot\|)$ be a Banach space and let $T:X\rightarrow X$ be a $(b,\theta,L)$-almost contraction.

Then

$1)$ $Fix\,(T)\neq\emptyset$;

$2)$ For any $x_0\in X$, there exists $\lambda\in (0,1)$ such that the Krasnoselkij iteration 
$\{x_n\}^\infty_{n=0}$, defined by  
$$
x_{n+1}=(1-\lambda)x_n+\lambda T x_n,\,n\geq 0,
$$
converges to some $x^\ast\in Fix\,(T)$, for any $x_0\in X$;

$3)$ The following estimate holds
$$
\|x_{n+i-1}-x^\ast\| \leq\frac{\delta^i}{1-\delta}\, \|x_n-
x_{n-1}\|\,,\quad n=0,1,2,\dots;\,i=1,2,\dots,
$$
where $\delta=\dfrac{\theta} {b+1}$.

\end{theorem}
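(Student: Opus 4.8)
The plan is to reduce Theorem \ref{thAC} to the fixed point theorem for $(\delta,L)$-almost contractions in complete metric spaces, namely Theorem 1 of \cite{Ber04}, applied to the averaged operator $T_\lambda$, exactly along the lines of the proof of Proposition \ref{p4a}. First I would fix
$$
\lambda=\frac{1}{b+1}\in(0,1],
$$
so that $b=\frac{1}{\lambda}-1$, and rewrite the enriched condition \eqref{eq3a}. Multiplying \eqref{eq3a} by $\lambda=\frac{1}{b+1}$ and using the identities $\lambda\big(b(x-y)+Tx-Ty\big)=T_\lambda x-T_\lambda y$ and $\lambda\big(b(x-y)+Tx-y\big)=T_\lambda x-y$ (which hold precisely because $\lambda b=1-\lambda$), the inequality takes the equivalent form
$$
\|T_\lambda x-T_\lambda y\|\le \delta\,\|x-y\|+L\,\|T_\lambda x-y\|,\qquad \forall\,x,y\in X,
$$
with $\delta=\dfrac{\theta}{b+1}\in(0,1)$; that is, $T_\lambda$ is a $(\delta,L)$-almost contraction on the complete metric space $(X,d)$, $d(x,y)=\|x-y\|$ (when $b=0$ this is trivial, since then $\lambda=1$ and $T_\lambda=T$ is already an almost contraction).

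Next I would note that the Krasnoselskij iteration $x_{n+1}=(1-\lambda)x_n+\lambda Tx_n$ is nothing but the Picard iteration $x_{n+1}=T_\lambda x_n$ of $T_\lambda$, and apply Theorem 1 of \cite{Ber04} to $T_\lambda$: this yields $Fix\,(T_\lambda)\neq\emptyset$ (recall that almost contractions need not have a unique fixed point, which is why part $(1)$ only asserts nonemptiness), convergence of $\{x_n\}$ to some $x^\ast\in Fix\,(T_\lambda)$ for every $x_0\in X$, and the error estimate. If a self-contained argument is preferred, one obtains the three conclusions for $T_\lambda$ from the standard chain: choosing $x=x_{k-1}$, $y=x_k$ in the displayed inequality (so that $T_\lambda x_{k-1}=x_k$, hence the last term vanishes) gives
$$
\|x_k-x_{k+1}\|\le\delta\,\|x_{k-1}-x_k\|\le\delta^{\,k-n+1}\|x_{n-1}-x_n\|,\qquad k\ge n,
$$
so $\{x_n\}$ is Cauchy and converges to some $x^\ast\in X$; then
$$
\|x^\ast-T_\lambda x^\ast\|\le\|x^\ast-x_{n+1}\|+\delta\,\|x_n-x^\ast\|+L\,\|x_{n+1}-x^\ast\|\To 0
$$
shows $x^\ast\in Fix\,(T_\lambda)$; and summing the tail of the geometric bound,
$$
\|x_{n+i-1}-x^\ast\|\le\sum_{k\ge n+i-1}\|x_k-x_{k+1}\|\le\frac{\delta^{\,i}}{1-\delta}\,\|x_{n-1}-x_n\|.
$$

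Finally I would transfer all of this back to $T$ by means of the key identity \eqref{average}, $Fix\,(T_\lambda)=Fix\,(T)$: this gives $Fix\,(T)\neq\emptyset$ in $(1)$; and since the limit $x^\ast$ lies in $Fix\,(T_\lambda)=Fix\,(T)$, conclusions $(2)$ and $(3)$ hold verbatim for the same sequence $\{x_n\}$, with the same $\lambda=\frac{1}{b+1}$ and $\delta=\frac{\theta}{b+1}$.

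As to difficulty, there is essentially no obstacle here: the proof is a routine algebraic rearrangement followed by an appeal to the metric fixed point theorem for almost contractions. The only points that require a little care are $(a)$ verifying that the rewriting of \eqref{eq3a} is an \emph{exact} equivalence for the specific value $\lambda=\frac{1}{b+1}$, so that no contractive information is lost when passing from $T$ to $T_\lambda$; and $(b)$ presenting the error estimate in the hybrid form $\frac{\delta^i}{1-\delta}\|x_n-x_{n-1}\|$ rather than only in the pure a priori form, which is obtained simply by telescoping $\|x_k-x_{k+1}\|\le\delta\|x_{k-1}-x_k\|$ and summing from index $n+i-1$ onward.
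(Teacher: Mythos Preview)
Your proposal is correct and follows exactly the approach the paper takes: reduce to Proposition~\ref{p4a} by setting $\lambda=\dfrac{1}{b+1}$, observe that the enriched condition \eqref{eq3a} becomes the classical $(\delta,L)$-almost contraction condition for $T_\lambda$, and then invoke Theorem~1 of \cite{Ber04} for $T_\lambda$ (noting that the Krasnoselskij iteration for $T$ is the Picard iteration for $T_\lambda$ and that $Fix\,(T_\lambda)=Fix\,(T)$). The paper itself does not spell out a proof of Theorem~\ref{thAC} here but defers to \cite{arhiv}; your self-contained derivation of the Cauchy property, the fixed point, and the error estimate is the standard one and matches what that reference does.
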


\begin{remark}
We note that Theorem \ref{thAC} unifies Theorem \ref{th-K}, Theorem \ref{thZ}, Theorem \ref{thB}, Theorem \ref{thK}, Theorem \ref{thC} and many other important fixed point theorems in metrical fixed point theory. 
\end{remark}

\subsection{Nonexpansive mappings}

Let $(X,d)$ be a linear normed space and let $\mathcal{C}_{NE}(X)$ denote the class of nonexpansive self mappings defined on $X$, that is, the class of all mappings which satisfy the 
contractive condition 
\begin{equation}\label{nonexp}
\|Tx-Ty\|\leq \|x-y\|,\,\textnormal { for all } x,y\in X.
\end{equation}
The concept of {\it enriched nenexpansive} mapping has been introduced and studied in \cite{Ber19a} in the setting of Hilbert spaces and in \cite{Ber20} in the setting of Banach spaces. 

\begin{definition} (Definition 2.1, \cite{Ber19a})\label{def5}
Let $(X,\|\cdot\|)$ be a linear normed space. A mapping $T:X\rightarrow X$ is said to be a $b$-{\it enriched nonexpansive mapping} if there exists $b\in[0,\infty)$  such that
\begin{equation} \label{eq3u}
\|b(x-y)+Tx-Ty\|\leq (b+1) \|x-y\|,\forall x,y \in X.
\end{equation}
\end{definition} 
Obviously, any nonexpansive mapping satisfies \eqref{eq3u} with $b=0$.

\begin{proposition} \label{p4}
Let $H$  be a Hilbert space. Then $\mathcal{C}_{NE}(H)$ is an unsaturated class of contractive mappings.
\end{proposition}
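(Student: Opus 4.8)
The plan is to mirror exactly the pattern used in Propositions \ref{p1}--\ref{p4a}: first show that every $b$-enriched nonexpansive mapping becomes, after averaging, an ordinary nonexpansive mapping, so that $\mathcal{C}^{e}_{NE}(H)$ is contained in the class of mappings whose averaged operator is nonexpansive; then exhibit a single $b$-enriched nonexpansive mapping on $H$ that is \emph{not} itself nonexpansive, which forces the inclusion $\mathcal{C}_{NE}(H)\subset\mathcal{C}^{e}_{NE}(H)$ to be strict. For the first part, given $T$ satisfying \eqref{eq3u} with constant $b$, I would set $\lambda=\dfrac{1}{b+1}\in(0,1]$, so that $b=\dfrac{1}{\lambda}-1$; substituting into \eqref{eq3u} and dividing by $b+1=\dfrac{1}{\lambda}$ rewrites the left-hand side as $\left\|\dfrac{1}{\lambda}\big(T_\lambda x-T_\lambda y\big)\right\|$ and the right-hand side as $\dfrac{1}{\lambda}\|x-y\|$, whence $\|T_\lambda x-T_\lambda y\|\le\|x-y\|$, i.e. $T_\lambda\in\mathcal{C}_{NE}(H)$.

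For the strictness, I would reuse the mapping from Example \ref{ex1}, suitably placed in the Hilbert space: the simplest choice is $H=\R$ and $Tx=-x$ (or $Tx=1-x$ on a convex subset, as before), which is not nonexpansive only in the trivial sense that $\|Tx-Ty\|=\|x-y\|$ — so that example is actually nonexpansive and will \emph{not} demonstrate strictness. I therefore need a genuinely expansive map whose average contracts; the natural candidate is $Tx=-\alpha x$ with $\alpha>1$ on $H=\R$, for which $\|Tx-Ty\|=\alpha|x-y|>|x-y|$, so $T\notin\mathcal{C}_{NE}(H)$, while $T_\lambda x=(1-\lambda)x-\lambda\alpha x=(1-\lambda-\lambda\alpha)x$ is nonexpansive as soon as $|1-\lambda(1+\alpha)|\le 1$, i.e. $0\le\lambda\le\dfrac{2}{1+\alpha}$, which is a nonempty subinterval of $(0,1]$. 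Equivalently one checks directly that $T$ satisfies \eqref{eq3u}: $\|b(x-y)+Tx-Ty\|=|b-\alpha|\,|x-y|\le(b+1)|x-y|$ holds for every $b\ge\dfrac{\alpha-1}{2}$. This exhibits $T\in\mathcal{C}^{e}_{NE}(H)\setminus\mathcal{C}_{NE}(H)$, completing the proof that the class is unsaturated.

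The only delicate point is the choice of the witnessing example: one must resist the temptation to recycle $Tx=1-x$ verbatim (as was done for the Banach, Kannan and Chatterjea classes), since an isometry already lies in $\mathcal{C}_{NE}$ and so cannot separate the two classes. Once the correct expansive linear model $Tx=-\alpha x$ (with $\alpha>1$) is in hand, everything else is the routine substitution $\lambda=1/(b+1)$ common to all five propositions, and no Hilbert-space geometry beyond the norm is actually needed — the statement would hold in any normed space, but it is phrased for $H$ to match the reference \cite{Ber19a} and the enriched-nonexpansive theory that follows.
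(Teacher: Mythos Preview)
Your argument is correct and follows exactly the paper's two-step template: first rewrite \eqref{eq3u} via $\lambda=\dfrac{1}{b+1}$ to obtain $\|T_\lambda x-T_\lambda y\|\le\|x-y\|$, then exhibit a map in $\mathcal{C}^{e}_{NE}\setminus\mathcal{C}_{NE}$. The only difference is the witnessing example: the paper (Example~\ref{ex5}, taken from \cite{Ber19a}) uses $Tx=\dfrac{1}{x}$ on $\left[\dfrac{1}{2},2\right]$, whereas you use the linear map $Tx=-\alpha x$ on $\mathbb{R}$ with $\alpha>1$. Both are valid; your example is more elementary and makes the enriched condition a one-line computation, while the paper's example has the incidental advantage of living on a bounded closed convex set, so it also serves as a concrete instance for Theorem~\ref{thNE}. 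Your observation that the isometry $Tx=1-x$ cannot be recycled here is exactly right and is the one point where this proposition genuinely differs from the previous four.
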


\begin{proof}
By putting $\lambda=\dfrac{1}{b+1}$, we obtain from \eqref{eq3u} that $T_\lambda\in \mathcal{C}_{NE}(H)$.

To prove that the inclusion $\mathcal{C}_{NE}(H)\subset \mathcal{C}^{e}_{NE}(H)$ is in general strict, we use the next example.
\end{proof}

\begin{example} (Example 2.1, \cite{Ber19a}) \label{ex5}
Let $X=\left[\dfrac{1}{2},2\right]$ be endowed with the usual norm and $T:X\rightarrow X$ be defined by $Tx=\dfrac{1}{x}$, for all $x\in \left[\dfrac{1}{2},2\right]$. Then 
$T\notin \mathcal{C}_{NE}(X)$ but $T_\lambda\in \mathcal{C}_{NE}(X)$ and hence $T\in \mathcal{C}^{e}_{NE}(X)$.

\end{example}

Proposition \ref{p4} now guaranties that the next theorem, the main result in \cite{Ber19a}, is a genuine generalization of  Lemma 3 of Petryshyn \cite{Pet} and of its global variant (Theorem 6) in Browder and Petryshyn \cite{BroP67}, in the setting of Hilbert spaces.

\begin{theorem} (\cite{Ber19a})\label{thNE}
Let $C$ be a bounded closed convex
subset of a Hilbert space $H$ and  $T:C\rightarrow C$ be a $b$-enriched nonexpansive and demicompact mapping. Then the set $Fix\,(T)$ of fixed points of $T$ is a nonempty convex set and there exists $\lambda\in \left(0,1\right)$ such that, for any given $x_0\in C$,  the Krasnoselskij iteration $\{x_n\}_{n=0}^{\infty}$ given by
$$
x_{n+1}=(1-\lambda) x_n+\lambda Tx_n,\,n\geq 0,
$$
converges strongly to a fixed point of $T$.

\end{theorem}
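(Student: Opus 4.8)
The plan is to reduce everything to the classical Krasnoselskij convergence theorem for nonexpansive demicompact self-mappings of a bounded closed convex subset of a Hilbert space (Petryshyn, Lemma~3 in \cite{Pet}; Browder--Petryshyn, Theorem~6 in \cite{BroP67}), applied not to $T$ itself but to a suitably averaged map. First I would put $\mu=\frac{1}{b+1}\in(0,1]$ and observe that dividing the enriched nonexpansiveness inequality \eqref{eq3u} by $b+1$ gives exactly $\|T_\mu x-T_\mu y\|\le\|x-y\|$ for all $x,y\in C$, where $T_\mu x=(1-\mu)x+\mu Tx$; since $C$ is convex, $T_\mu$ is a nonexpansive self-map of $C$, and by \eqref{average} we have $Fix\,(T_\mu)=Fix\,(T)$.

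Second, I would perform one more averaging to gain asymptotic regularity. For $t\in(0,1)$ set $\lambda=t\mu\in(0,\mu)\subseteq(0,1)$ and note the algebraic identity $(1-\lambda)x+\lambda Tx=(1-t)x+t\,T_\mu x$, i.e. $T_\lambda=(1-t)I+tT_\mu$; hence $T_\lambda$ is nonexpansive, being a convex combination of the nonexpansive maps $I$ and $T_\mu$. Because $C$ is a bounded closed convex subset of a Hilbert space and $T_\mu$ is nonexpansive, the averaged map $T_\lambda=(1-t)I+tT_\mu$ is asymptotically regular (Browder--Petryshyn \cite{BroP67}; Ishikawa \cite{Ishi} in general Banach spaces). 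Writing the Krasnoselskij iterates as $x_{n+1}=(1-\lambda)x_n+\lambda Tx_n=T_\lambda x_n$, asymptotic regularity means $x_{n+1}-x_n=\lambda(Tx_n-x_n)\to0$, so $\|x_n-Tx_n\|\to0$.

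Third, I would invoke demicompactness. The iterates $\{x_n\}$ lie in the bounded set $C$ and $\{x_n-Tx_n\}$ converges (to $0$), so demicompactness of $T$ yields a subsequence $x_{n_k}\to p$ strongly for some $p\in C$. Since $T_\lambda$ is nonexpansive, hence continuous, and $\|x_{n_k+1}-x_{n_k}\|=\|T_\lambda x_{n_k}-x_{n_k}\|\to0$, passing to the limit gives $T_\lambda p=\lim_k x_{n_k+1}=\lim_k x_{n_k}=p$; thus $p\in Fix\,(T_\lambda)=Fix\,(T)$ and, in particular, $Fix\,(T)\ne\emptyset$. The convexity of $Fix\,(T)$ then follows from the standard fact that the fixed point set of a nonexpansive self-map of a convex subset of a strictly convex space is convex, applied to $T_\mu$ (Hilbert spaces being strictly convex), together with $Fix\,(T)=Fix\,(T_\mu)$.

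Finally, I would upgrade the convergent subsequence to convergence of the whole sequence by a Fej\'er-monotonicity argument: for the fixed point $p$ obtained above, nonexpansiveness of $T_\lambda$ gives $\|x_{n+1}-p\|=\|T_\lambda x_n-T_\lambda p\|\le\|x_n-p\|$, so $\{\|x_n-p\|\}$ is nonincreasing, hence convergent; since $\|x_{n_k}-p\|\to0$ along the subsequence, the limit is $0$, i.e. $x_n\to p$ strongly. I expect the main obstacle to be the asymptotic regularity of $T_\lambda$ --- this is exactly why one cannot iterate the nonexpansive map $T_\mu$ directly but must pass to the further averaged map $T_\lambda=(1-t)I+tT_\mu$ --- while everything else is routine; alternatively, if one is content to quote Petryshyn's Lemma~3 (or Theorem~6 of \cite{BroP67}) verbatim, the proof reduces to checking that $T_\lambda$ satisfies its hypotheses.
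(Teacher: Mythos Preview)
The paper does not actually prove Theorem~\ref{thNE}; it merely quotes it as the main result of \cite{Ber19a} and moves on, so there is no in-paper proof to compare against. Your argument is correct and is essentially the standard one: pass to the averaged map $T_\mu$ with $\mu=\frac{1}{b+1}$ to get a genuine nonexpansive self-map of $C$, average once more to obtain asymptotic regularity, and then run the Petryshyn/Browder--Petryshyn machinery (demicompactness $\Rightarrow$ strongly convergent subsequence, Fej\'er monotonicity $\Rightarrow$ full convergence, strict convexity $\Rightarrow$ convex fixed point set).

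One small remark on the closing sentence: if you want to quote Petryshyn's Lemma~3 or Theorem~6 of \cite{BroP67} verbatim for $T_\mu$, you need $T_\mu$ (not just $T$) to be demicompact. This is immediate from $x-T_\mu x=\mu(x-Tx)$, so a bounded sequence with $\{x_n-T_\mu x_n\}$ convergent is exactly one with $\{x_n-Tx_n\}$ convergent; it would be worth stating this one line explicitly. Otherwise the proposal is complete.
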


\begin{remark}
The corresponding result to Theorem \ref{thNE} in the setting of Banach spaces has been established in \cite{Ber20}. 
\end{remark}

\section{Saturated classes of contractive mappings}

We first note the following important property emphasized in the previous section: if $\mathcal{C}$ is a saturated class of mappings, that is,
$$
\mathcal{C}=\mathcal{C}^{e},
$$
then there is no any $T\notin \mathcal{C}$, such that $T_{\lambda}\in  \mathcal{C}^{e}$. This expresses the fact that 
$$
\mathcal{C}^{e}=T_{\lambda} ( \mathcal{C})
$$
and means that, for any  $T\in \mathcal{C}$ and $\lambda \in (0,1]$, we have $T_{\lambda}\in  \mathcal{C}$ and, moreover,  $T_\lambda ( T_\mu) \in  \mathcal{C}$, for any $\lambda,\mu \in (0,1]$.

As a direct consequence, the classes of enriched mappings presented in the previous section, i.e., the class of enriched Banach contractions,  $\mathcal{C}_{B}(X)$, the class of Kannan contractions, $\mathcal{C}_{K}(X)$, the class of enriched Chatterjea contractions, $\mathcal{C}_{C}(X)$, the class  of enriched almost contractions, $\mathcal{C}_{AC}(X)$ and the class of enriched nonexpansive mappings, $\mathcal{C}_{NE}(X)$, are all examples of {\it saturated classes of mappings}. 

No significant new fixed point result can be obtained by applying the technique of enrichment to any of these classes of mappings.

Now we present two important saturated classes of contractive mappings in the metric fixed point theory:  strictly pseudo-contractive mappings and demicontractive mappings.

\subsection{Strictly pseudo-contractive mappings} 

Let $C$ be a nonempty subset of a normed space $X$. A mapping  $T:C\rightarrow C$ is called $k${\it -strictly pseudocontractive} if there exists $k\in (0,1)$ such that
\begin{equation}\label{eq-SPC}
\|Tx-Ty\|^2\leq \|x-y\|^2+k\|x-y-(Tx-Ty)\|^2,\forall x,y \in C.
\end{equation}

It is easy to see that any nonexpansive mapping is strictly pseudocontractive but the reverse is not more true, see Example \ref{ex6}. Denote by $\mathcal{C}_{SPC}(X)$ the class of strictly pseudocontractive mappings on $X$. 

If \eqref{eq-SPC} holds with $k=1$, then $T$ is called {\it pseudocontractive}. Obviously, any strictly pseudocontractive mapping is  pseudocontractive but the converse is not true in general.%, as shown by Example \ref{ex7}.

\begin{example} \label{ex6}
Let $X=\left[\dfrac{1}{2},2\right]$ be endowed with the usual norm and $T:X\rightarrow X$ be defined by $Tx=\dfrac{1}{x}$, for all $x\in \left[\dfrac{1}{2},2\right]$. Then 
condition \eqref{eq-SPC} becomes
$$
\left | \frac{1}{x}-\frac{1}{y}\right |^2\leq |x-y|^2+k\cdot \left |x-y-\left (\frac{1}{x}-\frac{1}{y}\right )\right |^2,
$$
which holds for all $x,y\in \left[\dfrac{1}{2},2\right]$ if $k\in \left (\dfrac{3}{5},1\right)$. Hence $T\in \mathcal{C}_{SPC}(X)$ but $T\notin \mathcal{C}_{NE}(X)$. Indeed, just take $x=1$ and $y=\dfrac{1}{2}$ in \eqref{nonexp} to reach to the contradiction $1\leq \dfrac{1}{2}$.
\end{example}

\begin{theorem} \label{p5}
Let $H$  be a Hilbert space. Then $\mathcal{C}_{SPC}(H)$ is a saturated class of contractive mappings.
\end{theorem}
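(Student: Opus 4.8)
The inclusion $\mathcal{C}_{SPC}(H)\subseteq\mathcal{C}_{SPC}^{e}(H)$ holds automatically by the Remark following Definition \ref{bogat}, so the plan is to prove the reverse inclusion $\mathcal{C}_{SPC}^{e}(H)\subseteq\mathcal{C}_{SPC}(H)$, which then yields $\mathcal{C}_{SPC}(H)=\mathcal{C}_{SPC}^{e}(H)$ and hence saturation. I would start from an arbitrary $T\in\mathcal{C}_{SPC}^{e}(H)$: by Definition \ref{bogat} there exist $\lambda\in(0,1]$ and $k\in(0,1)$ such that the averaged mapping $T_\lambda=(1-\lambda)I+\lambda T$ satisfies
\begin{equation*}
\|T_\lambda x-T_\lambda y\|^2\leq\|x-y\|^2+k\,\|x-y-(T_\lambda x-T_\lambda y)\|^2,\qquad x,y\in H,
\end{equation*}
and the target is to exhibit a constant $k'\in(0,1)$ for which $T$ itself satisfies \eqref{eq-SPC}.

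The decisive step is a change of variables. Setting $u=x-y$ and $w=(x-y)-(Tx-Ty)$, the definition of $T_\lambda$ immediately gives $T_\lambda x-T_\lambda y=u-\lambda w$ and $x-y-(T_\lambda x-T_\lambda y)=\lambda w$. I would substitute these into the displayed inequality and expand the left-hand side using the Hilbert-space identity $\|u-\lambda w\|^2=\|u\|^2-2\lambda\langle u,w\rangle+\lambda^2\|w\|^2$; the $\|u\|^2$ terms cancel, and after dividing by $\lambda>0$ one is left with the single relation
\begin{equation*}
2\langle u,w\rangle\geq\lambda(1-k)\,\|w\|^2 .
\end{equation*}

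Finally I would reinsert this into the expansion $\|Tx-Ty\|^2=\|u-w\|^2=\|u\|^2-2\langle u,w\rangle+\|w\|^2$, which produces
\begin{equation*}
\|Tx-Ty\|^2\leq\|x-y\|^2+\bigl(1-\lambda(1-k)\bigr)\,\|x-y-(Tx-Ty)\|^2 .
\end{equation*}
It then suffices to note that $k':=1-\lambda(1-k)$ lies in $(0,1)$, since $0<\lambda(1-k)\leq 1-k<1$ gives $k\leq k'<1$; thus $T$ is $k'$-strictly pseudocontractive, i.e.\ $T\in\mathcal{C}_{SPC}(H)$, and the two classes coincide.

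I do not anticipate a real obstacle: the whole computation is a couple of lines of inner-product algebra. The two points that do require care are, first, that the cancellation of $\|u\|^2$ and the expansion of $\|u-\lambda w\|^2$ genuinely use the inner product — this is precisely why the result is stated for a Hilbert space and not for a general Banach space — and second, that the resulting constant $k'$ stays strictly below $1$, which is exactly where the hypotheses $k<1$ and $\lambda\leq 1$ are used. As a consistency check one may observe that the mapping $Tx=1/x$ on $[1/2,2]$ from Example \ref{ex6}, though not nonexpansive, is already strictly pseudocontractive, so enriching this class indeed gains nothing.
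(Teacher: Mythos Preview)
Your argument is correct. The substitution $u=x-y$, $w=u-(Tx-Ty)$ and the inner-product expansion are clean, and the bound $k'=1-\lambda(1-k)\in[k,1)$ is exactly what is needed to place $T$ back in $\mathcal{C}_{SPC}(H)$.

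The paper, however, does not argue this way. Instead of proving $\mathcal{C}_{SPC}^{e}(H)\subseteq\mathcal{C}_{SPC}(H)$ directly, it shows that $\mathcal{C}_{SPC}(H)$ \emph{coincides} with the class of enriched nonexpansive mappings $\mathcal{C}_{NE}^{e}(H)$: starting from \eqref{eq-SPC}, it expands $\|x-y-(Tx-Ty)\|^2$, adds $b^2\|x-y\|^2+2b\langle x-y,Tx-Ty\rangle$ to both sides, and then chooses $b=k/(1-k)$ so that the cross term vanishes and the remaining coefficient becomes $(b+1)^2$, yielding exactly \eqref{eq3u}. Saturation then follows from the general observation at the start of Section~3 that every enriched class $\mathcal{C}^{e}$ is itself saturated (since $(T_\lambda)_\mu=T_{\lambda\mu}$). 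Your route is more self-contained --- it does not appeal to that general observation and produces an explicit constant $k'$ for $T$ --- whereas the paper's route yields the additional structural identification $\mathcal{C}_{SPC}(H)=\mathcal{C}_{NE}^{e}(H)$, which explains \emph{why} the class is saturated and is of independent interest (cf.\ the Remark following the theorem).
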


\begin{proof}
It is enough to show that $\mathcal{C}_{SPC}(H)=\mathcal{C}^{e}_{NE}(H)$. Let $T\in \mathcal{C}_{SPC}(H)$. Then  $T$ satisfies \eqref{eq-SPC} with $k\in (0,1)$.
But
$$
\|x-y-(Tx-Ty)\|^2=\langle x-y-(Tx-Ty), x-y-(Tx-Ty)\rangle>
$$
$$
=\|x-y\|^2-2\langle x-y, Tx-Ty\rangle+\|Tx-Ty\|^2
$$
and so \eqref{eq-SPC} is equivalent to
$$
\|Tx-Ty\|^2\leq \frac{1+k}{1-k}\cdot \|x-y\|^2-\frac{2k}{1-k}\cdot \langle x-y,Tx-Ty\rangle
$$
By adding  in both sides of the previous inequality the quantity
$$
b^2\cdot \|x-y\|^2+2b\langle x-y,Tx-Ty\rangle
$$
we deduce that \eqref{eq-SPC} is equivalent to
\begin{equation}\label{destept}
\|b(x-y)+Tx-Ty\|^2\leq \left(b^2+\frac{1+k}{1-k}\right) \|x-y\|^2+\left(2b-\frac{2k}{1-k}\right)\cdot \langle x-y,Tx-Ty\rangle.
\end{equation}
Now, by taking $b=\dfrac{k}{1-k}$, it follows that $b^2+\dfrac{1+k}{1-k}=(b+1)^2$ and therefore the inequality \eqref{destept} is equivalent to 
$$
\|b(x-y)+Tx-Ty\|\leq (b+1) \|x-y\|,\,\forall x,y \in C, 
$$
which shows that $T\in \mathcal{C}^{e}_{NE}(H)$. 

The converse follows by the fact that all transformations above are equivalent.

\end{proof}

\begin{remark}
Proposition \ref{p5} shows that Theorem 2.1 in \cite{Ber19b} is not a genuine extension of Theorem 12 in Browder and Petryshyn \cite{BroP67}. However, in the case of a Banach space $X$, the class of enriched nonexpansive mappings, $\mathcal{C}^{e}_{NE}(X)$ does not coincide with the class of strictly pseudo-contractive mappings $\mathcal{C}_{SPC}(X)$. Therefore, Theorem  3.2 in \cite{Ber20} is a genuine generalization of Theorem 12 in Browder and Petryshyn \cite{BroP67} and also of Theorem 1 in Senter and Dotson \cite{SeD}, which in turn, is an extension of Theorem 12 in Browder and Petryshyn \cite{BroP67}.
\end{remark}

\subsection{Demicontractive mappings} 

Let $C$ be a nonempty subset of a normed space $X$. A mapping  $T:C\rightarrow C$ is called $k${\it -demicontractive} if $Fix\,(T)\neq \emptyset$ and there exists $k<1$ such that
\begin{equation}\label{eq-DC}
\|Tx-y^*\|^2\leq \|x-y^*\|^2+k\|x-Tx\|^2,\forall x \in C, \forall y^* \in Fix\,(T).
\end{equation}

$T$ is said to be {\it quasi-nonexpansive} if $Fix\,(T)\neq \emptyset$ and 
\begin{equation}\label{eq-QNE}
\|Tx-y^*\|\leq \|x-y^*\|,\forall x \in C, \forall y^* \in Fix\,(T).
\end{equation}

It is easy to see that every quasi-nonexpansive mapping and every nonexpansive mapping with $Fix\,(T)\neq \emptyset$ is demicontractive but the reverses are not valid. 

Note also that if we take $y=y^*\in Fix\,(T)$ in \eqref{eq-SPC}, one obtains exactly condition \eqref{eq-DC}. This shows that any strictly pseudocontractive mapping is also demicontractive but the reverse is not true in general. 

Denote by $\mathcal{C}_{DC}(X)$ and $\mathcal{C}_{QNE}(X)$  the class of demicontractive mappings and quasi-nonexpansive mappings on $X$, respectively.

Recall that the class of demicontractive mappings has been introduced and studied independently in 1977 by M\u aru\c ster \cite{Mar77}, see also \cite{Mar73},  and Hicks and Kubicek \cite{Hic}.

\begin{example} \label{ex7}
Let $H$ be the real line and $C = [0, 1]$. Define $T$ on $C$ by $Tx = \dfrac{2}{3} x \sin \dfrac{1}{x}$, if $x\neq  0$ and $T 0 = 0$. Then, $Fix\,(T)=\{0\}$, $T$ is demicontractive (and also quasi nonexpansive) but $T$ is not nonexpansive. Indeed, for $x\in C$ and $y=0$,
$$
|Tx-0|^2 = |Tx|^2 = \left |\frac{2}{3}x\sin\left (1/x\right )\right |^2 \leq \left |\frac{2}{3} x\right|^2 \leq |x|^2 \leq |x-0|^2 +k|Tx-x|^2,
$$
 for any $k < 1$. Hence \eqref{eq-DC} is satisfied. 
 
 To see that $T$ is not nonexpansive, just take $x = \dfrac{2}{\pi}$ and $y = \dfrac{2}{3\pi}$ to get 
 $$|Tx - T y| = \frac{16}{9\pi} > \frac{4}{3\pi}=|x-y|.
 $$
 Note also that $T$ is not pseudocontractive, hence not strictly pseudocontractive, too. To see that, we take the same values $x = \dfrac{2}{\pi}$ and $y = \dfrac{2}{3\pi}$ as above to get 
 $$|Tx - T y|^2 = \frac{256}{81\pi^2}>\frac{160}{81\pi^2}= |x-y|^2+|(x-y)-(Tx-Ty)|^2.$$
\end{example}

\begin{theorem} \label{p9}
Let $H$  be a Hilbert space. Then $\mathcal{C}_{DC}(H)$ is a saturated class of contractive mappings.
\end{theorem}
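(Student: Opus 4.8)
Since the inclusion $\mathcal{C}_{DC}(H)\subseteq\mathcal{C}^{e}_{DC}(H)$ is automatic (it is the content of the Remark following Definition \ref{bogat}, taking $\lambda=1$), my plan is to prove only the reverse inclusion $\mathcal{C}^{e}_{DC}(H)\subseteq\mathcal{C}_{DC}(H)$; by Definition \ref{sat} this gives equality, hence saturatedness. So I would start from an arbitrary $T\in\mathcal{C}^{e}_{DC}(H)$, i.e.\ a $T$ for which the averaged map $T_\lambda$ of \eqref{def_T.lambda} is $k$-demicontractive for some $\lambda\in(0,1]$ and some $k<1$, and show that $T$ itself is demicontractive. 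First note that $Fix\,(T)=Fix\,(T_\lambda)\neq\emptyset$ by \eqref{average}, so only an admissible constant for \eqref{eq-DC} is missing; and since $\lambda=1$ gives $T_\lambda=T$ trivially, I may assume $\lambda\in(0,1)$.

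The main tool I would use is the Hilbert-space identity
\[
\|(1-\lambda)u+\lambda v\|^{2}=(1-\lambda)\|u\|^{2}+\lambda\|v\|^{2}-\lambda(1-\lambda)\|u-v\|^{2},
\]
applied with $u=x-y^{*}$ and $v=Tx-y^{*}$ for $y^{*}\in Fix\,(T)$, so that $(1-\lambda)u+\lambda v=T_\lambda x-y^{*}$ and $u-v=x-Tx$; I would also use $x-T_\lambda x=\lambda(x-Tx)$, hence $\|x-T_\lambda x\|^{2}=\lambda^{2}\|x-Tx\|^{2}$. Plugging all of this into the $k$-demicontractivity inequality for $T_\lambda$ turns it into
\[
(1-\lambda)\|x-y^{*}\|^{2}+\lambda\|Tx-y^{*}\|^{2}-\lambda(1-\lambda)\|x-Tx\|^{2}\leq\|x-y^{*}\|^{2}+k\lambda^{2}\|x-Tx\|^{2}.
\]
Then I would cancel $(1-\lambda)\|x-y^{*}\|^{2}$ against part of the right side, collect the $\|x-Tx\|^{2}$ terms, and divide by $\lambda>0$ to arrive at
\[
\|Tx-y^{*}\|^{2}\leq\|x-y^{*}\|^{2}+\bigl(1-\lambda(1-k)\bigr)\|x-Tx\|^{2}.
\]
Setting $k':=1-\lambda(1-k)$, the hypotheses $k<1$ and $\lambda\in(0,1)$ force $\lambda(1-k)>0$, so $k'<1$; thus $T$ satisfies \eqref{eq-DC} and $T\in\mathcal{C}_{DC}(H)$, completing the argument.

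I do not expect a real obstacle: the computation is elementary, and the only delicate point — making sure enrichment cannot push the demicontractivity constant up to $1$ — is handled automatically, since going from $T$ to $T_\lambda$ only moves the constant toward $1$ from below. If one prefers to mirror the proof of Theorem \ref{p5} for strictly pseudocontractive mappings, an alternative plan is to establish the sharper identity $\mathcal{C}_{DC}(H)=\mathcal{C}^{e}_{QNE}(H)$: the same Hilbert-space identity shows that $T_\lambda$ quasi-nonexpansive (condition \eqref{eq-QNE}) implies $T$ is $(1-\lambda)$-demicontractive, and conversely that a $k$-demicontractive $T$ has $T_\lambda$ quasi-nonexpansive once $\lambda\leq 1-k$; one then concludes by the general fact that any class of the form $\mathcal{D}^{e}$ is saturated, because $(T_\lambda)_\mu=T_{\lambda\mu}$ with $\lambda\mu\in(0,1]$ gives $(\mathcal{D}^{e})^{e}=\mathcal{D}^{e}$.
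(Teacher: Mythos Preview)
Your proof is correct, and it takes a more direct route than the paper. You show saturatedness by proving $\mathcal{C}^{e}_{DC}(H)\subseteq\mathcal{C}_{DC}(H)$ head-on: starting from the $k$-demicontractivity of $T_\lambda$ and unwinding via the convexity identity $\|(1-\lambda)u+\lambda v\|^{2}=(1-\lambda)\|u\|^{2}+\lambda\|v\|^{2}-\lambda(1-\lambda)\|u-v\|^{2}$, you recover demicontractivity of $T$ itself with the explicit constant $k'=1-\lambda(1-k)<1$. The paper instead proves the identification $\mathcal{C}_{DC}(H)=\mathcal{C}^{e}_{QNE}(H)$ (precisely the alternative you sketch in your final paragraph): it expands $\|x-Tx\|^{2}$ via the inner product, adds the cross term $b^{2}\|x-y^{*}\|^{2}+2b\langle x-y^{*},Tx-y^{*}\rangle$ to both sides, and chooses $b=k/(1-k)$ so that the demicontractive inequality becomes exactly the enriched quasi-nonexpansive condition \eqref{eq3w}; saturatedness then follows from the general principle that any class of the form $\mathcal{D}^{e}$ satisfies $(\mathcal{D}^{e})^{e}=\mathcal{D}^{e}$. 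Your approach is shorter and entirely self-contained, requiring only the single Hilbert-space identity you quote; the paper's approach yields the additional structural fact that demicontractive mappings are \emph{exactly} the enriched quasi-nonexpansive ones, in parallel with Theorem~\ref{p5}.
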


\begin{proof}
It is enough to show that $\mathcal{C}_{DC}(H)$ coincides with the class of enriched quasi nonexpansive mappings, $\mathcal{C}^{e}_{QNE}(H)$. 

Using the Definition \ref{def5} of enriched nonexpansive mappings, it follows that a mapping $T$ is  {\it enriched quasi nonexpansive} if there exists $b\in[0,\infty)$  such that
\begin{equation} \label{eq3w}
\|b(x-y^*)+Tx-y^*\|\leq (b+1) \|x-y^*\|,\forall x \in X \textnormal{ and } y^*\in Fix\,(T).
\end{equation}
Now let $T\in \mathcal{C}_{DC}(H)$. This means that $T$ satisfies \eqref{eq-DC} with $k\in (0,1)$.%, that is, 
%$$
%\|Tx-y^*\|^2\leq \|x-y^*\|^2+k\|x-Tx\|^2,\,\forall x \in X \textnormal{ and } y^*\in Fix\,(T).
%$$

Similarly to the proof of Theorem \ref{p5}, we have
$$
\|x-Tx\|^2=\|x-y^*-(Tx-y^*)\|^2=\langle x-y^*-(Tx-y^*), x-y^*-(Tx-y^*)\rangle>
$$
$$
=\|x-y^*\|^2-2\langle x-y^*, Tx-y^*\rangle+\|Tx-y^*\|^2
$$
and so \eqref{eq-DC}  is equivalent to
$$
\|Tx-y^*\|^2\leq \frac{1+k}{1-k}\cdot \|x-y^*\|^2-\frac{2k}{1-k}\cdot \langle x-y^*,Tx-y^*\rangle
$$
Now, by adding in both sides of the previous inequality the quantity
$$
b^2\cdot \|x-y^*\|^2+2b\langle x-y^*,Tx-y^*\rangle
$$
one obtains the equivalent inequality
$$
\|b(x-y^*)+Tx-y^*\|^2
$$
\begin{equation}\label{destept-1}
\leq \left(b^2+\frac{1+k}{1-k}\right) \|x-y^*\|^2+\left(2b-\frac{2k}{1-k}\right)\cdot \langle x-y^*,Tx-y^*\rangle.
\end{equation}
Now, by taking $b=\dfrac{k}{1-k}$, we have that $b^2+\dfrac{1+k}{1-k}=(b+1)^2$ and therefore the inequality \eqref{destept-1} is equivalent to 
$$
\|b(x-y^*)+Tx-y^*\|\leq (b+1) \|x-y^*\|,\,\forall x,y \in C, 
$$
which shows that $T\in \mathcal{C}^{e}_{QNE}(H)$. 

The converse follows by the fact that all transformations above are equivalent.

\end{proof}

\begin{remark}
Proposition \ref{p9} shows that the class of demicontractive mappings cannot be enlarged by the technique of enriching contractive mappings. 
\end{remark}

But there are many other important classes of contractive mappings in the metric fixed point theory, e.g., the \' Ciri\' c-Reich-Rus contractions and \' Ciri\' c quasi contractions that deserve to be investigated in order to establish whether they are (un)saturated or not.

Let $(X,d)$ be a metric space. In 1971, \' Ciri\' c \cite{Cir71}, Reich \cite{Rei71} and Rus \cite{Rus71} have established independently a fixed point theorem for mappings $T:X\rightarrow X$ satisfying the following condition:
\begin{equation} \label{cond_CRR}
d(Tx,Ty)\leq a d(x,y)+b \left(d(x,Tx)+d(y,Ty)\right),\textnormal{ for all } x,y \in X,
\end{equation}
where $a,b\geq 0$ and $a+2b<1$.

Denote by $\mathcal{C}_{CRR}(X)$ the class of \' Ciri\' c-Reich-Rus contractions on $X$, that is, the class of mappings which satisfy the contractive condition \eqref{cond_CRR} on the metric space $(X,d)$. 

We note that if $b=0$, condition  \eqref{cond_CRR} reduces to Banach's contraction condition \eqref{cond_Banach}, while, for $a=0$ condition  \eqref{cond_CRR} reduces to Kannan's contraction condition \eqref{cond_Kannan}.
\medskip

{\bf Open problem 1.} \label{pr1}
Let $X$  be a Banach space. Is $\mathcal{C}_{CRR}(X)$ a saturated / unsaturated class of contractive mappings ?
\medskip

Denote by $\mathcal{C}_{QC}(X)$ the class of \' Ciri\' c quasi contractions on $X$, that is, of mappings which satisfy the contractive condition \eqref{eq-Ciric} on the metric space $(X,d)$.
\medskip

{\bf Open problem 2.} \label{pr2}
Let $X$  be a Banach space. Is  $\mathcal{C}_{QC}(X)$ a saturated / unsaturated  class of contractive mappings ?

\section{Conclusions and further study}

1. Based on the technique of enriching contractive type mappings $T$ by means of the averaged operator $T_\lambda$,  we introduced the concept of {\it saturated} class of contractive mappings. 

2. We have shown that, from this  perspective, the contractive type mappings in the metric fixed point theory could be separated into two distinct classes, unsaturated contractive mappings and saturated contractive mappings.

3. We identified some important unsaturated classes of mappings, by surveying some significant fixed point results obtained recently for five remarkable classes of contractive mappings, and have shown that the technique of enriching contractive type mappings provided in all cases genuine new fixed point results. 

4. In the second part of the paper, we also identified two important saturated classes of contractive mappings: strictly pseudocontractive mappings and demicontractive mappings. The main feature of these saturated classes of mappings is that they cannot be expanded by means of the technique of enriching contractive mappings. 

5. We also formulated two open problems which are intended to establish whether \' Ciri\' c-Reich-Rus contractions and \' Ciri\' c quasi contractions are saturated or unsaturated classes of contractions. 

6. As further additional study, we are interested  to consider the same problem as above but for other contractive type mappings from metric fixed point theory that are exposed in \cite{Alg}, \cite{Ber07}, \cite{Ber12}, \cite{BerP13}, \cite{Mes}, \cite{Rho}, \cite{Rus79}-\cite{Rus08} and references therein.

\section*{Acknowledgments}
The first author acknowledges the constant support offered by Department of Mathematics, Faculty of Sciences, North University Centre at Baia Mare, Technical University of Cluj-Napoca.

\end{document}